
\documentclass[a4paper]{amsart}

\usepackage{amsmath}
\usepackage{amssymb}
\usepackage{amsthm}

\usepackage{hyperref}

\usepackage[T1]{fontenc}

\usepackage{tikz-cd}

\usepackage{tikz}
\usetikzlibrary{shapes,fit,positioning,calc,matrix}
\tikzset{
  optree/.style={scale=.5,thick,grow'=up,level distance=10mm,inner sep=1pt},
  comp/.style={draw=none,circle,fill,line width=0,inner sep=0pt},
  dot/.style={draw,circle,fill,inner sep=0pt,minimum width=3pt},
  circ/.style={draw,circle,inner sep=1pt,minimum width=4mm},
  emptycirc/.style={draw,circle,inner sep=1pt,minimum width=2mm},
  root/.style={level distance=10mm,inner sep=1pt},
  leaf/.style={draw=none,circle,fill,line width=0,inner sep=0pt},
  nodot/.style={draw,circle,inner sep=1pt},
}

\theoremstyle{plain}
\newtheorem{theorem}{Theorem}
\numberwithin{theorem}{section}
\newtheorem{lemma}[theorem]{Lemma}
\newtheorem{proposition}[theorem]{Proposition}
\newtheorem{corollary}[theorem]{Corollary}

\theoremstyle{remark}
\newtheorem*{remark*}{Remark}
\newtheorem{remark}[theorem]{Remark}

\theoremstyle{definition}
\newtheorem{definition}[theorem]{Definition}
\newtheorem{example}[theorem]{Example}

\newtheorem{problem}{Problem}
\newtheorem*{problem*}{Problem}

\DeclareMathOperator{\Tw}{Tw}

\newcommand{\lmod}{{\operatorname{-Mod}}}

\newcommand{\cA}{\mathcal{A}}
\newcommand{\cB}{\mathcal{B}}
\newcommand{\cC}{\mathcal{C}}
\newcommand{\cD}{\mathcal{D}}

\begin{document}

\title{Operadic categories and quasi-Gr\"obner categories}
\author{Sergei Burkin}
\date{\today}
\subjclass{13P10, 18A25, 05E10, 16P40, 18M60}
\keywords{Representation stability, Gr\"obner methods, operads, operadic categories}
\address{Rua Marquês de São Vicente, 225, Edifício Cardeal Leme, sala 862, Gávea - Rio de Janeiro, CEP: 22451-900, Brazil}
\email{sburkin@protonmail.com}

\begin{abstract}
    Quasi-Gr\"obner categories were introduced by Sam and Snowden to unify treatment of categories in representation stability. We give new examples of quasi-Gr\"obner categories. Most of these categories are operadic categories of Batanin and Markl which are used to encode homotopy coherent structures. This suggests that other operadic categories might also be quasi-Gr\"obner. Additionally, we show that set-operads form a full subcategory of the category of operadic categories. We state several open problems.
\end{abstract}

\maketitle

\section*{Introduction}

For a category $\cC$ a $\cC$-module is a functor from $\cC$ to the category of $k$-modules for some ring $k$. Representation stability deals with naturally occurring modules over categories, and allows to give a good estimate on the dimension of these modules. To make this estimate one first  has to prove that for a given category $\cC$ and a sufficiently nice ring $k$ the category of $\cC$-modules over $k$ is locally Noetherian, i.e.\@ that any submodule of a finitely generated $\cC$-module over $k$ is finitely generated. To prove this one can use the notion of a \emph{quasi-Gr\"obner} category of Sam and Snowden (\cite{sam2017grobner}): for any quasi-Gr\"obner category $\cC$ and a left-Noetherian ring $k$ the category of $\cC$-modules over $k$ is locally Noetherian. Most categories of interest in representation stability are quasi-Gr\"obner.

Our main aim is to give new examples of quasi-Gr\"obner categories. These categories are related to operads: either they can be constructed from operads or at least they have the structure of the opposite of an operadic category of Batanin and Markl (\cite{Batanin2015operadic}). But first we clarify the connection between operads and operadic categories that is of independent interest: we show in Proposition~\ref{prp:operads-embed-into-opcats} that the former are the full subcategory of the latter via the functor $\cC$ from \cite{deBrito2018catp}. 

We expect that familiar properties of an operad $P$ can be used in the study of modules over the corresponding category $\cC(P)^{op}$. This is suggested by the example of \cite{tosteson2019representation}, which uses Koszulity of $Com$ in the study of modules over the corresponding PROP. Categories $\cC(P)^{op}$ are closely related to PROPs: for any operad $P$ there is a discrete opfibration $\cC(P)^{op}\to PROP(P)$. This motivates us to study how discrete opfibrations relate quasi-Gr\"obner property of their source and target.

Most of the new examples of quasi-Gr\"obner categories are related to operads in yet another way: these categories encode structures of (generalized) operads. Examples include the category $\mathcal{C}(pOp)^{op}$, the subcategory of active morphisms of the planar version of dendroidal category $\Omega$ of Moerdijk--Weiss \cite{moerdijk2007dendroidal}; and the category $\mathcal{C}(mOp_{(g,n)})^{op}$, which, after a minor modification, is essentially the subcategory of active morphisms of the category $U$ of Hackney--Robertson--Yau (\cite{hackney2020graphical}), and is also related to the operadic category $\mathtt{Gr}$ from \cite[Section~3]{batanin2018operadic}. The objects of these categories are trees and graphs with half-edges endowed with additional structure, and morphisms are subgraph insertions. We prove in Theorem~\ref{thm:pop-grobner} and Theorem~\ref{thm:mopgn-grobner} that these two categories are quasi-Gr\"obner. The proofs are a bit involved, and rely on the main lemma from \cite{barter2015noetherianity} used in the proof of the quasi-Gr\"obner property of the category $\mathbf{PT}$, which is also a category of trees, but not directly related to operads. If we restrict ourselves to the full subcategories on trees and graphs without vertices of arity 0 (i.e.\@ without vertices of degree 1), then the proofs of the two statements become much simpler. 

These categories are fairly involved and a priori there is no reason why these categories would be quasi-Gr\"obner. This leads to the following open problem.

\begin{problem} 
Is there any reasonable property of an operad $P$ that implies that the category $\cC(P)^{op}$ is quasi-Gr\"obner?
\end{problem}

The author expected that quasi-Gr\"obner property of categories might be related to the theory of Gr\"obner bases for operads, since the operads $pOp$ and $mOp_{(g,n)}$ have particularly nice Gr\"obner bases. Yet it seems that there is no such connection. 

There are several functors related to the functor $\cC$. Some of these functors produce classical quasi-Gr\"obner categories from the operads $uAs$ and $uCom$ of monoids and commutative monoids, and in some cases we get new examples related to non-commutative sets and cyclic sets. However, Proposition~\ref{pr:dendroidal-not-grobner} demonstrates that in general we cannot expect the categories $\Tw(P)$ and $\mathcal{U}(P)$ to be quasi-Gr\"obner even for nice operads $P$. This observation itself might be useful, since it suggests that one might want to focus on subcategories $\mathcal{C}(P)^{op}$ and $PROP(P)$ of $\Tw(P)$ and $\mathcal{U}(P)$, i.e.\@ on their subcategories of active morphisms. This is why categories of active morphisms appear in the following problem about Joyal's categories $\Theta_n$, which are known to be related to little disks operads~$E_n$.

\begin{problem}
     Are Joyal's categories $\Theta_n$ or their subcategories of active morphisms quasi-Gr\"obner? Are the categories of modules over these categories locally Noetherian?
\end{problem}

Another new example of a quasi-Gr\"obner category we give is the category $\mathbf{ncCS}$ that has connected surfaces with boundary as objects and cobordisms without caps as morphisms. We expect that the category $\mathbf{ncCS}$ might serve instead of $\mathbf{FS}^{op}$ as the indexing category of modules related to modular operads (see \cite{tosteson2019representation} for an example of such a module).

\subsection*{Structure of the paper.}
In the first part of Section~\ref{sec:lemmas} we recall basic notions in representation stability. Since categories in this work usually appear in pairs connected by discrete opfibrations, in the second part we explain how discrete opfibrations relate Gr\"obner and quasi-Gr\"obner properties of their \'etale and base categories. In Section~\ref{sec:operadic-categories} we recall the construction $\mathcal{C}$ of categories from operads and observe that it produces operadic categories. In Section~\ref{sec:graphs} we give ad hoc proofs that categories $\mathcal{C}(P)^{op}$ for several operads $P$ that encode generalized operads are quasi-Gr\"obner. In Section~\ref{sec:general-tw} we describe another construction of categories from operads and show that a category related to 2-cobordisms and modular operads is quasi-Gr\"obner. In Section~\ref{sec:further-examples} we give further examples of quasi-Gr\"obner categories related to operads, classical and new. 

\subsection*{Prerequirements.} We assume that the reader is familiar with operads. We only consider (coloured) operads in the category of sets. 

\subsection*{Acknowledgements.} The author is grateful to Anton Khoroshkin for his remarks and suggestions. The author also thanks the University of Tokyo and Pontifical Catholic University of Rio de Janeiro for their hospitality. 

\section{Preliminaries}
\label{sec:lemmas}

We recall basic facts about quasi-Gr\"obner categories introduced in \cite{sam2017grobner} and prove a few lemmas used in the main part.

\subsection{Quasi-Gr\"obner categories}

Let $\mathcal{C}$ be a category and $k$ be a ring.

\begin{definition}
    A $\mathcal{C}$-module $M$ over $k$ is a functor $M$ from $\mathcal{C}$ to the category $k\lmod$ of left $k$-modules. A morphism of $\mathcal{C}$-modules is a natural transformation of functors. An element of $M$ is an element of $M(c)$ for some object $c$ of $\mathcal{C}$. A submodule of $M$ generated by a subset $S$ of elements of $M$ is the smallest submodule of $M$ containing~$S$.
\end{definition}

\begin{definition}
    A $\mathcal{C}$-module $M$ over $k$ is Noetherian if every submodule of $M$ is finitely generated. The category of $\mathcal{C}$-modules over $k$ is locally Noetherian if every finitely generated $\mathcal{C}$-module is Noetherian.
\end{definition}

\begin{definition}
    Let $\cC$ be a small category and $c$ be an object in $\cC$. An admissible order on the set of morphisms from $c$ is a lift of the functor $Hom(c,-):\cC\to Set$ to a functor $\cC\to\mathbf{WO}$, where $\mathbf{WO}$ is the category of well-orders and strictly order preserving maps. 
\end{definition}

\begin{remark} An admissible order on the set of morphisms from $c$ endows for each object $c'$ of $\mathcal{C}$ the set $Hom(c,c')$ with a well-order $\preceq_{c'}$ such that for any $f,f':c\to c'$ and any $g:c'\to c''$ if $f\prec_{c'} f'$ then $g\circ f\prec_{c''} g\circ f'$. In the opposite direction, an admissible order on the set of morphisms from $c$ can be constructed from such a collection of compatible well-orders on the sets $Hom(c,c')$ for all $c'$.
\end{remark}

\begin{definition}
    Let $\cC$ be a small category. For any object $c$ in $\cC$ the set of morphisms from $c$ in $\cC$ has a canonical preorder $\leq$, with $f\leq g$ if there is $h$ such that $h\circ f=g$. Recall that any preorder $X$ generates a poset $X/{\sim}$ by identifying all the elements $a$ and $b$ in $X$ such that $a\leq b$ and $b\leq a$. The poset $|c/\cC|$ is the poset generated by the preorder $\leq$ on the set of morphisms from $c$ in $\cC$. 
\end{definition}

\begin{definition}
    A poset $X$ is Noetherian if for any sequence $x_1,x_2,\dots$ in $X$ there is $i < j$ such that $x_i\leq x_j$. Equivalently, $X$ is Noetherian if it satisfies descending chain condition and does not admit infinite anti-chains.
\end{definition}

\begin{definition}
    A category $\mathcal{C}$ is Gr\"obner if for all objects $c$ in $\cC$:
    \begin{itemize}
        \item[(G1)] the set of morphisms from $c$ can be endowed with an admissible order, and
        \item[(G2)] the poset $|c/\cC|$ is Noetherian.
    \end{itemize}
\end{definition}

\begin{definition}
    A functor $\Phi:\cC\to\cD$ satisfies \textbf{property (F)} if for any object $d$ of $\cD$ there exist finitely many objects $c_1,\dots,c_n$ in $\mathcal{C}$ and morphisms $f_i:d\to\Phi(c_i)$ such that for any object $c$ in $\cC$ any morphism $f:d\to \Phi(c)$ can be factored as $\Phi(g)\circ f_i$ for some $i$ and some $g:c_i\to c$ in $\mathcal{C}$.
\end{definition}

\begin{definition}
    A category $\cD$ is quasi-Gr\"obner if it admits an essentially surjective functor $\Phi:\cC\to\cD$ that satisfies property (F), with $\cC$ Gr\"obner. 
\end{definition}

\begin{remark} 
A composition of functors that satisfy property (F) satisfies property (F). Thus if $\cC$ is quasi-Gr\"obner and $\Phi:\cC\to\cD$ is essentially surjective and satisfies property (F), then $\cD$ is quasi-Gr\"obner.
\end{remark}

The following was shown in \cite[Theorem~4.3.2]{sam2017grobner}.

\begin{theorem}
Let $\cC$ be a quasi-Gr\"obner category and $k$ be a left-Noetherian ring. Then the category of $\cC$-modules over $k$ is locally Noetherian.
\end{theorem}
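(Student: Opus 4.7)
The plan is to reduce first to the case that $\cC$ itself is Gr\"obner, and then to carry out a Gr\"obner-basis style argument. For the reduction, let $\Phi:\cB\to\cC$ be an essentially surjective functor from a Gr\"obner category $\cB$ satisfying property (F). Restriction along $\Phi$ yields a functor $\Phi^{*}:\cC\lmod\to\cB\lmod$. Property (F) implies $\Phi^{*}$ preserves finite generation: for any finite generating set of $M\in\cC\lmod$ sitting at objects $d_{1},\dots,d_{\ell}$, applying property (F) to each $d_{j}$ produces finitely many objects of $\cB$ at which a finite generating set of $\Phi^{*}M$ can be read off. Essential surjectivity makes $\Phi^{*}$ injective on submodule lattices, because a $\cC$-submodule of $M$ is determined by its values on the essential image of $\Phi$. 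Hence any ascending chain of submodules of $M$ embeds in the submodule lattice of $\Phi^{*}M$ and therefore stabilises whenever $\Phi^{*}M$ is Noetherian.

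So assume $\cC$ is Gr\"obner. Every finitely generated $\cC$-module is a quotient of a finite direct sum of principal projectives $P(c):=k[\cC(c,-)]$, and it suffices to show that each $\bigoplus_{i=1}^{n}P(c_{i})$ is Noetherian. The admissible order on $\cC(c_{i},-)$ assigns to any nonzero element $m\in P(c_{i})(c')$ a leading morphism $\mathrm{lm}(m)\in\cC(c_{i},c')$, namely the $\preceq_{c'}$-maximal $f$ appearing in $m$ with nonzero coefficient. Given a submodule $N\subseteq\bigoplus P(c_{i})$, let $L_{i}(N)\subseteq\bigsqcup_{c'}\cC(c_{i},c')$ be the set of leading morphisms of nonzero elements of $N$ supported in the $i$-th summand. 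Compatibility of $\preceq$ with composition forces $L_{i}(N)$ to be upward closed for the preorder $\leq$, and hence to descend to an upward closed subset of $|c_{i}/\cC|$. Noetherianity of that poset implies every such subset is generated by finitely many elements.

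Accordingly, pick finitely many elements $m_{1},\dots,m_{r}\in N$ whose leading morphisms generate $L_{1}(N),\dots,L_{n}(N)$. The standard Gr\"obner reduction then shows they generate $N$: given $n\in N$, if $\mathrm{lm}(n)=g\circ\mathrm{lm}(m_{j})$ for some $j$ and $g$, subtract a scalar multiple of $g\cdot m_{j}$ to produce $n'\in N$ with $\mathrm{lm}(n')\prec\mathrm{lm}(n)$; since each $\preceq_{c'}$ is a well-order, the process terminates within any one object. The step I expect to require the most care is packaging this element-level reduction into Noetherianity of the whole submodule lattice: given an ascending chain $N_{1}\subseteq N_{2}\subseteq\cdots$, the chains $L_{i}(N_{1})\subseteq L_{i}(N_{2})\subseteq\cdots$ of upward closed subsets of $|c_{i}/\cC|$ stabilise by poset Noetherianity, and one must then combine this with the reduction argument to rule out a residual infinite ascent of submodules occurring below the level of leading morphisms, concluding that the $N_{j}$ themselves stabilise.
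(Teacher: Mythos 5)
The paper does not actually prove this statement; it is quoted from Sam--Snowden \cite{sam2017grobner} (Theorem~4.3.2), so your proposal can only be judged on its own terms. Your first paragraph --- reducing to the Gr\"obner case via property (F) (which makes $\Phi^{*}$ preserve finite generation) and essential surjectivity (which makes $\Phi^{*}$ injective on submodule lattices) --- is correct and is exactly the standard reduction.

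The Gr\"obner-basis half has a genuine gap: the hypothesis that $k$ is left-Noetherian is never used, and the theorem is false without it (take $\cC$ to be the terminal category, so that $\cC$-modules are just $k$-modules). Concretely, your reduction step ``subtract a scalar multiple of $g\cdot m_{j}$ to lower the leading morphism'' requires the leading coefficient of $n$ to lie in the left ideal generated by the leading coefficient of $m_{j}$; this is automatic over a field but not over a general ring. With $\cC$ trivial, $k=\mathbb{Z}$, $N=P(c)=\mathbb{Z}$ and $m_{1}=3$, the leading morphisms of $m_{1}$ generate $L(N)$ but $m_{1}$ does not generate $N$, and no subtraction can strictly lower the leading morphism of $n=1$. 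The fix, which is the actual content of Sam--Snowden's argument, is to record for each $f$ in $|c/\cC|$ the left ideal $\operatorname{init}_{f}(N)\subseteq k$ of leading coefficients of elements of $N$ with leading morphism $f$; this is a monotone map from the Noetherian poset $|c/\cC|$ to the lattice of left ideals of $k$, and one proves an ascending chain condition for such monotone maps using \emph{both} Noetherianity of the poset and left-Noetherianity of $k$. Only then does the reduction argument show that equality of initial data forces equality of submodules. A smaller organizational issue: defining $L_{i}(N)$ via elements ``supported in the $i$-th summand'' misses most elements of $N$; it is cleaner to prove each $P(c)$ is Noetherian and then use that a finite direct sum of Noetherian modules is Noetherian.
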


\begin{definition}
    A functor $\Phi:\mathcal{C}\to\mathcal{D}$ satisfies \textbf{property (S)} if for any morphisms $f:c\to c'$ and $g:c\to c''$ in $\cC$ such that there is $h':\Phi{c'}\to \Phi{c''}$ in $\cD$ with $\Phi(g)=h'\circ\Phi(f)$, there is a morphism $h$ in $\cC$ such that $g=h\circ f$.
\end{definition}

\begin{proposition}[Proposition 4.4.2 in \cite{sam2017grobner}]
    If $\Phi:\cC\to\cD$ is faithful and satisfies property (S) and $\cD$ is Gr\"obner, then $\cC$ is Gr\"obner.
\end{proposition}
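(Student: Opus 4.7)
The plan is to verify the two conditions (G1) and (G2) for $\cC$ by transporting the corresponding structures back from $\cD$ along $\Phi$, using faithfulness for (G1) and property (S) for (G2).

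For (G1), fix an object $c$ of $\cC$ and pick an admissible order on the morphisms from $\Phi(c)$ in $\cD$. Since $\Phi$ is faithful, the map $\Phi:\mathrm{Hom}_\cC(c,c')\to\mathrm{Hom}_\cD(\Phi(c),\Phi(c'))$ is injective for every $c'$, so the pullback of the well-order on the target gives a well-order $\preceq_{c'}$ on $\mathrm{Hom}_\cC(c,c')$. I would then check compatibility: if $f\prec_{c'} f'$ in $\cC$ and $g:c'\to c''$, then $\Phi(f)\prec \Phi(f')$ by construction, hence $\Phi(g\circ f)=\Phi(g)\circ\Phi(f)\prec \Phi(g)\circ\Phi(f')=\Phi(g\circ f')$ by admissibility in $\cD$, which by definition of $\preceq_{c''}$ yields $g\circ f\prec_{c''} g\circ f'$.

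For (G2), I would show that $\Phi$ induces an embedding of posets $\pi:|c/\cC|\hookrightarrow|\Phi(c)/\cD|$ given by $\pi([f])=[\Phi(f)]$. Well-definedness and order-preservation are immediate from functoriality: a factorization $f'=h\circ f$ in $\cC$ yields $\Phi(f')=\Phi(h)\circ\Phi(f)$ in $\cD$. The crucial point, where property (S) enters, is that $\pi$ is order-reflecting: if $\Phi(f)\le \Phi(f')$, i.e.\ $\Phi(f')=h'\circ\Phi(f)$ for some $h'$ in $\cD$, then (S) produces $h$ in $\cC$ with $f'=h\circ f$, so $f\le f'$ in $\cC$. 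An order-reflecting map of posets is automatically injective, so $\pi$ is a poset embedding. Since $\cD$ is Gr\"obner, $|\Phi(c)/\cD|$ is Noetherian, and any subposet of a Noetherian poset is Noetherian, which gives (G2) for $\cC$.

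I do not expect a serious obstacle here: the argument is essentially a bookkeeping exercise, and the only nontrivial ingredient is the use of (S) to lift factorizations from $\cD$ to $\cC$, which is tailor-made for the order-reflection step.
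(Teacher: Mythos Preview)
Your argument is correct. The paper does not give its own proof of this proposition (it is cited from Sam--Snowden), but your treatment of (G1) is essentially the content of the paper's Lemma~\ref{lem:faithful-g1}, and your (G2) argument via the order-reflecting map $|c/\cC|\to|\Phi(c)/\cD|$ is the standard one. One minor remark: for the Noetherianity conclusion you only need that $\pi$ is order-reflecting, not that it is injective, so the embedding observation, while true, is slightly more than required.
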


\subsection{Lemmas}

Most of the categories that we consider come in pairs connected by discrete opfibrations. This motivates the following lemmas.

\begin{definition}
    A functor $G:\cC\to\cD$ is a discrete opfibration if for any object $c$ in $\cC$ and any morphism $g:G(c)\to d$ in $\cD$ there is unique morphism $f$ from $c$ in $\cC$ such that $G(f)=g$. The morphism $f$ is called the lift of $g$ to $c$.
\end{definition}

Notice that essentially surjective discrete opfibrations are surjective on objects.

\begin{lemma}
\label{lem:grobner-opfibration-left-right}
Let $G:\cC\to\cD$ be a discrete opfibration surjective on objects. If $\cC$ is quasi-Gr\"obner, then $\cD$ is quasi-Gr\"obner.
\end{lemma}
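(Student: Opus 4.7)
The plan is to construct a Gr\"obner cover of $\cD$ by composing a Gr\"obner cover of $\cC$ with $G$. Using the hypothesis, fix a Gr\"obner category $\cB$ together with an essentially surjective functor $\Phi\colon\cB\to\cC$ satisfying property (F). I claim the composite $G\circ\Phi\colon\cB\to\cD$ is essentially surjective and satisfies property (F), which exhibits $\cD$ as quasi-Gr\"obner.

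Essential surjectivity of $G\Phi$ is immediate: for $d\in\cD$, surjectivity of $G$ on objects gives $c\in\cC$ with $G(c)=d$, essential surjectivity of $\Phi$ gives $b\in\cB$ with $\Phi(b)\cong c$, and then $G\Phi(b)\cong d$.

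For property (F), the strategy is to fix $d\in\cD$ and a preimage $c_0\in G^{-1}(d)$, apply property (F) of $\Phi$ at $c_0$ to obtain a finite family of morphisms $h_i\colon c_0\to\Phi(b_i)$, and take $f_i=G(h_i)\colon d\to G\Phi(b_i)$. The discrete opfibration structure of $G$ is used crucially to lift any morphism $f\colon d\to G\Phi(b)$ in $\cD$ uniquely to a morphism $\tilde f\colon c_0\to c$ in $\cC$ with $G(\tilde f)=f$ and $G(c)=G\Phi(b)$. Applying property (F) of $\Phi$ at $c_0$ to $\tilde f$ then yields a factorization $\tilde f=\Phi(g)\circ h_i$ in $\cC$, and pushing it down through $G$ produces the required factorization $f=G\Phi(g)\circ f_i$ in $\cD$.

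The main technical obstacle will be aligning the target $c$ of the lift $\tilde f$ with $\Phi(b)$: a priori these are merely two objects in the same $G$-fiber $G^{-1}(G\Phi(b))$, whereas the literal statement of property (F) for $G\Phi$ demands a morphism $b_i\to b$ in $\cB$ with exactly this target. My plan to handle this is to reduce, via a standard strictification, to the case where $\Phi$ is surjective on objects (one adjoins to $\cB$ a new object for each object of $\cC$ outside the image of $\Phi$, with morphisms copied along a chosen isomorphism); this reduction preserves both Gr\"obnerness of $\cB$ and property (F) of $\Phi$, and after it the lift target $c$ is literally of the form $\Phi(b')$ for some $b'$, so the factorization data can be matched to the $b$ demanded by property (F) by the usual isomorphism-invariance of the definition of a quasi-Gr\"obner category.
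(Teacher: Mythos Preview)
Your argument follows the same route as the paper's one-line proof: the paper simply asserts that $G$ itself satisfies property~(F) (with a single $c_1\in G^{-1}(d)$ and $f_1=\mathrm{id}_d$) and then uses closure of property~(F) under composition, while you verify property~(F) for $G\Phi$ directly. Both share the same gap, and it is precisely the one you flag but do not actually resolve. The opfibration lift $\tilde f\colon c_0\to c$ of $f\colon d\to G\Phi(b)$ need not end at $\Phi(b)$. Your strictification of $\Phi$ guarantees $c=\Phi(b')$ for \emph{some} $b'$, and property~(F) of $\Phi$ then yields $g'\colon b_i\to b'$; but property~(F) for $G\Phi$ demands a morphism $b_i\to b$ to the \emph{given} $b$. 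Since $b$ and $b'$ merely satisfy $G\Phi(b)=G\Phi(b')$ --- an equality in $\cD$, not an isomorphism in $\cB$ or even in $\cC$ --- your appeal to ``isomorphism-invariance'' is unjustified and does not close the gap.

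Here is an explicit failure of the method (and of the paper's stated argument). Let $\cD$ be the arrow category $\ast\xrightarrow{\alpha}\bullet$, let $F\colon\cD\to\mathrm{Set}$ have $F(\ast)=\{x\}$, $F(\bullet)=\{y_1,y_2\}$, $F(\alpha)(x)=y_1$, and set $\cC=\int F$ with $G$ the projection. Then $G$ is a discrete opfibration surjective on objects, and both $\cC$ and $\cD$ are finite and Gr\"obner, so the lemma's conclusion holds trivially. Nevertheless $G$ does \emph{not} satisfy property~(F): for $c=(\bullet,y_2)$ and $\alpha\colon\ast\to\bullet=G(c)$ there is no morphism in $\cC$ from the unique preimage $(\ast,x)$ of $\ast$ to $(\bullet,y_2)$. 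Taking $\Phi=\mathrm{id}_\cC$ shows $G\Phi$ fails property~(F) as well. Thus neither your proof nor the paper's can succeed by exhibiting $G$ or $G\Phi$ as a property-(F) functor; some genuinely different idea is required.
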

\begin{proof}
Discrete opfibrations that are surjective on objects satisfy property (F), with the identity morphisms $id_d$ being the morphisms $f_1$ that ensure property (F), with $c_1$ being any preimage of $d$.
\end{proof}

\begin{lemma}
\label{lem:faithful-g1}
Let $\mathcal{D}$ be a category that satisfies (G1) and let $G:\mathcal{C}\to\mathcal{D}$ be a faithful functor. Then $\mathcal{C}$ satisfies (G1).
\end{lemma}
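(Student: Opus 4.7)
The plan is to transport the admissible order on $Hom(G(c),-)$ back along $G$ using faithfulness. Fix an object $c$ of $\mathcal{C}$. By (G1) for $\mathcal{D}$, pick an admissible order on morphisms out of $G(c)$, i.e.\@ a collection of well-orders $\preceq_d$ on $Hom(G(c),d)$ for $d\in\mathcal{D}$, compatible with post-composition in $\mathcal{D}$.

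For every object $c'$ of $\mathcal{C}$, faithfulness of $G$ means the map $G_{c,c'}:Hom(c,c')\to Hom(G(c),G(c'))$ is injective. Define $\preceq_{c'}$ on $Hom(c,c')$ as the pullback order: $f\preceq_{c'} f'$ iff $G(f)\preceq_{G(c')} G(f')$. The pullback of a well-order along an injection is again a well-order (any nonempty subset $S\subseteq Hom(c,c')$ has image $G_{c,c'}(S)$ with a least element, whose unique preimage is the least element of $S$), so each $\preceq_{c'}$ is a well-order.

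It remains to check compatibility with post-composition in $\mathcal{C}$. Given $f,f':c\to c'$ with $f\prec_{c'} f'$ and any $g:c'\to c''$, by definition $G(f)\prec_{G(c')} G(f')$, hence by admissibility of $\preceq$ in $\mathcal{D}$ applied to $G(g):G(c')\to G(c'')$ we get $G(g)\circ G(f)\prec_{G(c'')} G(g)\circ G(f')$, that is $G(g\circ f)\prec_{G(c'')} G(g\circ f')$, which by definition of $\preceq_{c''}$ means $g\circ f\prec_{c''} g\circ f'$. Thus the collection $\{\preceq_{c'}\}_{c'\in\mathcal{C}}$ assembles into an admissible order on morphisms from $c$, establishing (G1) for $\mathcal{C}$.

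There is no real obstacle here; the only subtle point is recognizing that the pullback of a well-order along an injection remains a well-order, which is immediate, and that strictness of the order is preserved by $G$ precisely because $G$ is injective on $Hom$-sets.
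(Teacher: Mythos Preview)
Your proof is correct and follows essentially the same approach as the paper: pull back the well-orders on $Hom(G(c),-)$ along the injective maps $G_{c,c'}$, then use faithfulness to verify that strict inequalities are preserved under post-composition. The paper's version additionally inserts an arbitrary tie-breaking well-order on each fiber $G^{-1}(h)$, but since faithfulness makes these fibers singletons within each $Hom(c,c')$, that refinement is superfluous and your streamlined argument is equivalent.
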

\begin{proof}
Let $c$ be an object of $\mathcal{C}$ and $\prec$ be an admissible order on the morphisms from $G(c)$ in $\mathcal{D}$ that exists by the property (G1). For every morphism $h$ from $G(c)$ in $\mathcal{D}$ choose any well-order $\prec_h$ on the set $G^{-1}(c,h)$ of morphisms $f$ from $c$ such that $G(f)=h$.  Define the order $\prec'$ on the set of morphisms from $c$ in $\cC$ so that $f\prec' f'$ if $G(f)\prec G(f')$ or if $G(f)=G(f')$ and $f\prec_{G(f)} f'$. This is a well-order. 

To show that $\prec'$ is admissible, let $f,f':c\to c'$ and $g:c'\to c''$ be morphisms in $\cC$ with $f\prec' f'$. By faithfulness $G(f)\neq G(f')$, and thus $G(f)\prec G(f')$. By admissibility of $\prec$ we have $G(g\circ f)\prec G(g\circ f')$, and thus $g\circ f\prec' g\circ f'$. The order $\prec'$ is admissible.
\end{proof}

\begin{lemma}
\label{lem:discrete-opfibrations-grobner}
Let $G:\mathcal{C}\to\mathcal{D}$ be a discrete opfibration. If $\mathcal{D}$ is Gr\"obner, then $\mathcal{C}$ is Gr\"obner.
\end{lemma}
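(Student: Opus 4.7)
The plan is to show that (G1) and (G2) transfer from $\mathcal{D}$ to $\mathcal{C}$ along $G$. The first step is almost free: any discrete opfibration is faithful, since two morphisms $f,f':c\to c'$ with $G(f)=G(f')$ are both lifts of the same morphism from $G(c)$ to $c$, hence equal by uniqueness of lifts. With this in hand, (G1) for $\mathcal{C}$ follows immediately from Lemma~\ref{lem:faithful-g1} applied to $G$.

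The substance of the proof is (G2). Fix an object $c\in\mathcal{C}$. I claim that $G$ induces an isomorphism of posets $|c/\mathcal{C}|\cong |G(c)/\mathcal{D}|$. As a map of sets, $G$ sends morphisms out of $c$ to morphisms out of $G(c)$, and by the discrete opfibration property this map is a bijection: every $g:G(c)\to d$ in $\mathcal{D}$ has a unique lift starting at $c$.

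Next one checks that this bijection is an isomorphism of preorders. If $f\leq f'$ among morphisms out of $c$, say $f'=h\circ f$, then $G(f')=G(h)\circ G(f)$, so $G(f)\leq G(f')$. For the reverse implication, suppose $f:c\to c'$ and $f':c\to c''$ satisfy $G(f')=h'\circ G(f)$ for some $h':G(c')\to G(c'')$. Apply the discrete opfibration property at the object $c'$ to the morphism $h'$: there is a unique lift $h:c'\to\tilde c$ with $G(h)=h'$. Then $h\circ f$ is a morphism from $c$ in $\mathcal{C}$ with $G(h\circ f)=h'\circ G(f)=G(f')$, and so by uniqueness of lifts from $c$, we get $h\circ f=f'$ (and in particular $\tilde c=c''$). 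Hence $f\leq f'$ in $\mathcal{C}$.

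Thus the bijection preserves and reflects $\leq$, so it descends to an isomorphism of the associated posets $|c/\mathcal{C}|\cong |G(c)/\mathcal{D}|$. Since $\mathcal{D}$ is Gröbner, $|G(c)/\mathcal{D}|$ is Noetherian, hence so is $|c/\mathcal{C}|$. Combined with (G1), this shows that $\mathcal{C}$ is Gröbner. The only nontrivial step is producing the factorization in $\mathcal{C}$ from a factorization in $\mathcal{D}$, and this is precisely what the unique lifting property of a discrete opfibration buys us.
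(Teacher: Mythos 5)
Your proof is correct and follows essentially the same route as the paper: faithfulness of discrete opfibrations plus Lemma~\ref{lem:faithful-g1} for (G1), and the identification of $|c/\mathcal{C}|$ with $|G(c)/\mathcal{D}|$ (which the paper states as an isomorphism of slice categories without proof) for (G2). You simply spell out the details of the bijection on morphisms out of $c$ and the preservation/reflection of the preorder, which the paper leaves implicit.
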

\begin{proof}
Any slice category $c/\cC$ is isomorphic to the corresponding slice category $G(c)/\cD$. Thus if $\cD$ satisfies property (G2) then $\cC$ satisfies property (G2). Discrete opfibrations are faithful, and property (G1) follows from Lemma~\ref{lem:faithful-g1}.
\end{proof}

\begin{lemma}\label{lem:opfibration-quasi-grobner}
Let $G:\mathcal{A}\to\mathcal{B}$ be a discrete opfibration. If $\mathcal{B}$ is quasi-Gr\"obner, then $\mathcal{A}$ is quasi-Gr\"obner.
\end{lemma}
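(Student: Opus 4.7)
The plan is to transport the witnessing Gröbner cover of $\mathcal{B}$ along $G$ by pulling it back. Since $\mathcal{B}$ is quasi-Gröbner, fix a Gröbner category $\mathcal{C}$ and an essentially surjective functor $\Phi:\mathcal{C}\to\mathcal{B}$ satisfying property (F). Form the strict pullback $\widetilde{\mathcal{C}} := \mathcal{C}\times_{\mathcal{B}}\mathcal{A}$ with projections $\widetilde{\Phi}:\widetilde{\mathcal{C}}\to\mathcal{A}$ and $\widetilde{G}:\widetilde{\mathcal{C}}\to\mathcal{C}$. The goal is to show that $\widetilde{\mathcal{C}}$ is Gröbner and that $\widetilde{\Phi}$ is essentially surjective and satisfies property (F).

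First I would verify that $\widetilde{G}$ is itself a discrete opfibration. Given $(c,a)$ in $\widetilde{\mathcal{C}}$, so that $\Phi(c)=G(a)$, and a morphism $g:c\to c'$ in $\mathcal{C}$, the morphism $\Phi(g):G(a)\to\Phi(c')$ in $\mathcal{B}$ has a unique lift $\tilde g:a\to a'$ with $G(\tilde g)=\Phi(g)$; then $(g,\tilde g):(c,a)\to(c',a')$ is the required unique lift in $\widetilde{\mathcal{C}}$. By Lemma~\ref{lem:discrete-opfibrations-grobner} applied to $\widetilde{G}$, the category $\widetilde{\mathcal{C}}$ is Gröbner.

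Next I would check essential surjectivity of $\widetilde{\Phi}$. For any object $a$ of $\mathcal{A}$, pick $c$ in $\mathcal{C}$ together with an isomorphism $\alpha:G(a)\to\Phi(c)$ in $\mathcal{B}$, using essential surjectivity of $\Phi$. Lift $\alpha$ through $G$ to a morphism $\tilde\alpha:a\to a'$ in $\mathcal{A}$ with $G(a')=\Phi(c)$; the unique lift of $\alpha^{-1}$ from $a'$ provides a two-sided inverse by uniqueness of lifts, so $\tilde\alpha$ is an isomorphism and $(c,a')$ is an object of $\widetilde{\mathcal{C}}$ with $\widetilde{\Phi}(c,a')=a'\cong a$.

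Finally, for property (F), fix $a$ in $\mathcal{A}$. By (F) for $\Phi$, choose finitely many objects $c_1,\dots,c_n$ of $\mathcal{C}$ and morphisms $g_i:G(a)\to\Phi(c_i)$ such that every $h:G(a)\to\Phi(c)$ factors as $\Phi(k)\circ g_i$ for some $i$ and $k:c_i\to c$. Lift each $g_i$ through $G$ to $\tilde g_i:a\to a_i$ with $G(a_i)=\Phi(c_i)$, so each $(c_i,a_i)$ lies in $\widetilde{\mathcal{C}}$. Given any morphism $\tilde h:a\to a'$ in $\mathcal{A}$ with $(c,a')$ in $\widetilde{\mathcal{C}}$, apply (F) to $G(\tilde h):G(a)\to\Phi(c)$ to obtain $G(\tilde h)=\Phi(k)\circ g_i$. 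Lift $\Phi(k):G(a_i)\to G(a')$ uniquely to $\tilde k:a_i\to a'$; by uniqueness of lifts of $G(\tilde h)$ starting at $a$, one has $\tilde h=\tilde k\circ\tilde g_i$, and $(k,\tilde k)$ is a morphism $(c_i,a_i)\to(c,a')$ in $\widetilde{\mathcal{C}}$ with $\widetilde{\Phi}(k,\tilde k)=\tilde k$. Thus $\widetilde{\Phi}$ satisfies (F), and $\mathcal{A}$ is quasi-Gröbner. The only real subtlety is the repeated use of uniqueness of opfibration lifts to match targets of lifted morphisms, and this is the step I would write out most carefully.
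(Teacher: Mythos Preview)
Your proposal is correct and follows essentially the same approach as the paper: form the pullback of the Gr\"obner cover along $G$, observe that the projection to the Gr\"obner side is again a discrete opfibration (so the pullback is Gr\"obner by Lemma~\ref{lem:discrete-opfibrations-grobner}), and then verify essential surjectivity and property~(F) for the other projection by lifting morphisms through $G$. The only cosmetic difference is that the paper cites stability of discrete opfibrations under pullback rather than verifying it by hand, and your phrasing ``lift $\Phi(k)$ to $\tilde k:a_i\to a'$'' slightly anticipates the conclusion---the target is only identified with $a'$ after invoking uniqueness of the lift of $G(\tilde h)$---but you flag exactly this point at the end, and the argument is sound.
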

\[\begin{tikzcd}
	\cC & \cD \\
	\cA & \cB
	\arrow[from=1-1, to=1-2]
	\arrow["\Phi", from=1-2, to=2-2]
	\arrow["{\Phi'}"', from=1-1, to=2-1]
	\arrow["G"', from=2-1, to=2-2]
\end{tikzcd}\]
\begin{proof}
Since $\cB$ is quasi-Gr\"obner, there is essentially surjective functor $\Phi:\cD\to \cB$ that satisfies property (F), with $\cD$ Gr\"obner. Let $\cC$ be the pullback of $\Phi:\cD\to \cB$ and $G:\cA\to \cB$, and $\Phi':\cC\to \cA$ be the functor in the pullback square. Recall that pullbacks of categories are formed by taking fibered products of sets: the objects (and the morphisms) in $\cC$ are the pairs of objects (respectively the pairs of morphisms) in $\cA$ and $\cD$ that have the same image in $\cB$, and the functors $\cC\to \cA$ and $\cC\to \cD$ are the projection maps.

Discrete opfibrations are stable under pullbacks, thus $\cC\to \cD$ is a discrete opfibration and by Lemma~\ref{lem:discrete-opfibrations-grobner} the category $\cC$ is Gr\"obner. 

To show that $\Phi':\cC\to \cA$ is essentially surjective, let $A$ be an object in $\cA$. Since $\Phi:\cD\to \cB$ is essentially surjective, there is an object $D$ in $\cD$ such that there is an isomorphism $G(A)\to\Phi(D)$ in $\cB$. Let $A\to A'$ be the lift of this isomorphism to $\mathcal{A}$. Lifts of isomorphisms are isomorphisms, thus $A'$ is isomorphic to $A$. The pair $(A',D)$ is an object of $\mathcal{C}$, and its image under $\Psi'$ is isomorphic to $A$. Thus $\Phi'$ is essentially surjective.

Next we prove the property (F) for $\Phi'$. Let $A$ be an object in $\cA$. The property (F) of $\Phi$ with respect to $G(A)$ gives  objects $D_i$ in $\cD$ and morphisms $f_i:G(A)\to \Phi(D_i)$ in $\cB$. Let $f'_i:A\to A_i$ be the lifts of $f_i$ to $\cA$. The pairs $(A_i, D_i)$ are objects of $\cC$. We will show that $f'_i$ and $(A_i, D_i)$ ensure the property (F) of $\Phi'$ with respect to $A$. Let $(A', D')$ be an object of $\cC$, and $f':A\to A'$ be a morphism in $\cA$. We have to find an index $i$ and a morphism $g':(A_i,D_i)\to (A',D')$ in $\cC$ such that $f'=\Phi'(g')\circ f'_i$. Let $g:D_i\to D'$ be the morphism in $\cD$ given by the property (F) of $\Phi$ with respect to $G(f'):G(A)\to G(A')$ and $D'$ (this is possible to do since $G(A')=\Phi(D')$), i.e.\@ such that $G(f')=\Phi(g)\circ f_i$. Let $g':(A_i, D_i)\to (X,D')$ be the lift of $g$ to $\cC$. We have that $G(\Phi'(g')\circ f'_i)=\Phi(g)\circ f_i=G(f')$. Opfibration property of $G$ implies that $\Phi'(g')\circ f'_i=f'$, as was desired.
\end{proof}

\section{The main construction and operadic categories}
\label{sec:operadic-categories}

We show that the construction from \cite{deBrito2018catp} of categories from operads produces operadic categories of Batanin and Markl \cite{Batanin2015operadic}. The opposites of these categories will be the main examples of quasi-Gr\"obner categories in this work.

\begin{definition}
    The category $\mathcal{S}$ is the full subcategory of the category of finite sets on objects $\underline{n}=\{0,\dots,n\}$ for all $n\geq 0$.
\end{definition}

\begin{definition}[{\cite[Example 7.1]{deBrito2018catp}}]
Let $P$ be an operad in the category of sets. Define the category $\cC(P)$ as follows. The objects of $\cC(P)$ are the operations of $P$. A morphism $f:q\to p$ in $\cC(P)$ is a \emph{2-level tree}, i.e.\@ a planar tree (with leaves) of height $2$ with vertices marked by operations of $P$ so that:
\begin{itemize}
    \item the number of input edges of a vertex is equal to the arity of the operation marking this vertex,
    \item the lowest vertex is marked by $p$, the target of $f$, and the upper vertices are marked by some operations $q_i$,
    \item the input leaves are adjacent to the upper vertices, and not to the lowest vertex,
    \item the input leaves are indexed via permutation of $\underline{n}$, where $n$ is the arity of $q$, so that for each upper vertex $q_i$ the indices of the leaves above it increase in planar order,
    \item $q$ is equal to the operadic composition of $p$ with the operations $q_i$, further permuted via the permutation on leaves.
\end{itemize}
The composition is computed by grafting the upper vertices of one 2-level tree into the leaves of the other according to the permutation on leaves, and contracting all the subtrees in the obtained tree of height $3$ that are above the lowest vertex, see Figure~\ref{fig:2leveltrees}. Associativity of composition in these categories follows from associativity of operadic composition. This construction gives a functor $\cC$ from set-operads to categories.
\end{definition}

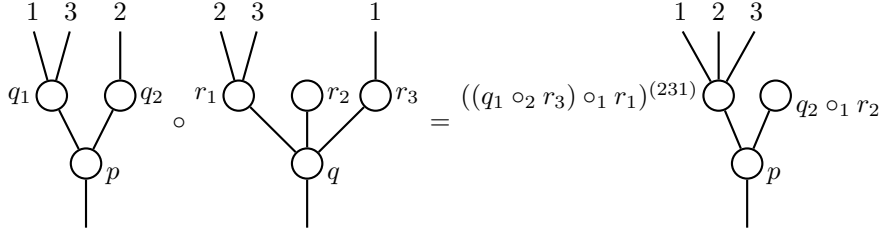
\begin{figure}[t]
\begin{align*}
  \begin{aligned}\begin{tikzpicture}[optree,
    level distance=18mm,
    level 2/.style={sibling distance=18mm},
    level 3/.style={sibling distance=10mm}]
  \node{}
  child { node[circ,label=-3:$p$]{}
      child { node[circ,label=180:$q_1$]{}
        child { node[label=above:$1$]{} }
        child { node[label=above:$3$]{} } }
      child { node[circ,label=0:$q_2$]{}
        child { node[label=above:$2$]{} } } };
\end{tikzpicture}\end{aligned}
\circ
\begin{aligned}\begin{tikzpicture}[optree,
    level distance=18mm,
    level 2/.style={sibling distance=18mm},
    level 3/.style={sibling distance=10mm}]
  \node{}
  child { node[circ,label=-3:$q$]{}
      child { node[circ,label=180:$r_1$]{}
        child { node[label=above:$2$]{} }
        child { node[label=above:$3$]{} } }
      child { node[circ,label=0:$r_2$]{} }
      child { node[circ,label=0:$r_3$]{}
        child { node[label=above:$1$]{} } } };
\end{tikzpicture}\end{aligned}
=
\begin{aligned}\begin{tikzpicture}[optree,
    level distance=18mm,
    level 2/.style={sibling distance=15mm},
    level 3/.style={sibling distance=10mm}]
  \node{}
  child { node[circ,label=-3:$p$]{}
      child { node[circ,label=180:$((q_1\circ_2 r_3)\circ_1 r_1)^{(231)}$]{}
        child { node[label=above:$1$]{} }
        child { node[label=above:$2$]{} }
        child { node[label=above:$3$]{} }}
      child { node[circ,label=-3:$q_2\circ_1 r_2$]{} } };
\end{tikzpicture}\end{aligned}
\end{align*}
\caption{A composition of two morphisms in $\mathcal{C}(P)$. Notice that $q$ is equal to $((p\circ_2 q_2)\circ_1 q_1)^{(132)}$.}
\label{fig:2leveltrees}
\end{figure}

\begin{example}\label{ex:c-ucom}
    For the terminal operad $uCom$ of commutative monoids the category $\cC(uCom)$ is isomorphic to the category $\mathcal{S}$. Since $\mathcal{C}$ is a functor, any category $\cC(P)$ is endowed with canonical functor to $\mathcal{S}$.
\end{example}

\begin{proposition}
\label{prp:operads-embed-into-opcats}
    For any operad $P$ (in the category of sets) the category $\mathcal{C}(P)$ is canonically endowed with a structure of an operadic category. The functor $\mathcal{C}:Operads\to OpCats$ is a fully faithful functor from set-operads to operadic categories and operadic functors. 
\end{proposition}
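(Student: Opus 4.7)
The plan is to equip $\mathcal{C}(P)$ with a cardinality functor $|-|\colon \mathcal{C}(P) \to \mathcal{S}$ that arises from the unique operad morphism $P \to uCom$ composed with the isomorphism $\mathcal{C}(uCom) \cong \mathcal{S}$ of Example~\ref{ex:c-ucom}. Concretely, $|-|$ sends an operation $p$ of arity $n$ to $\underline{n}$, and sends a morphism $f\colon q \to p$ represented by a 2-level tree with lowest vertex $p$ of arity $k$ and upper vertices $q_1,\ldots,q_k$ to the map of finite sets recording which upper vertex each input leaf of $q$ attaches to. I would declare the fiber of $f$ over the index $i$ to be the operation $q_i$ labelling the $i$-th upper vertex; verifying that this furnishes an operadic category structure in the sense of Batanin--Markl is the first half of the claim.

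I would then check the axioms in turn. The axiom on fibers of identity morphisms reduces to the observation that $\mathrm{id}_p$ for $p$ of arity $k$ is represented by the 2-level tree whose upper vertices are the arity-$1$ units and whose lowest vertex is $p$. The axiom relating the fibers of a composite $g\circ f$ to those of $f$ and $g$ falls out directly from the grafting-and-contracting rule defining composition in $\mathcal{C}(P)$ (Figure~\ref{fig:2leveltrees}), since by construction that rule records precisely the substitution law of operadic composition in $P$. The remaining coherence conditions---local terminals, compatibility with $|-|$, behaviour under the chosen orderings in $\mathcal{S}$---reduce to similar direct unfoldings of the 2-level tree definition.

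For the fully faithful claim, on a morphism of operads $\phi\colon P \to P'$ the functor $\mathcal{C}(\phi)$ simply relabels the vertices of 2-level trees by applying $\phi$, and is manifestly an operadic functor over $\mathcal{S}$; faithfulness is immediate because the object-map of $\mathcal{C}(\phi)$ is $\phi$ itself on operations. For fullness, given an operadic functor $F\colon \mathcal{C}(P) \to \mathcal{C}(P')$ over $\mathcal{S}$, I would define $\phi\colon P \to P'$ on operations by $\phi(p) := F(p)$. Compatibility of $F$ with the cardinality functor forces $\phi$ to preserve arity. To recover compatibility with operadic composition, I would exploit that every total composite $p \circ (q_1,\ldots,q_k)$ is the source of a canonical morphism in $\mathcal{C}(P)$ targeting $p$ with fibers $q_i$; applying $F$ and reading off the (preserved) target and fibers yields $\phi(p \circ (q_1,\ldots,q_k)) = \phi(p) \circ (\phi(q_1),\ldots,\phi(q_k))$. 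Preservation of the unit and of the symmetric action are extracted similarly, from 2-level trees with a single trivial upper vertex and from those whose data consists only of a leaf permutation.

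The main obstacle I anticipate is bookkeeping rather than content: Batanin--Markl operadic categories carry an abundance of axioms, and the 2-level tree description of $\mathcal{C}(P)$ already encodes the full symmetric operadic composition of $P$, so one must track carefully how the leaf permutation attached to each 2-level tree interacts with both $|-|$ and the fiber assignments. Once the translation between morphisms in $\mathcal{C}(P)$ and operadic composites in $P$ is set up, each individual axiom is a short check, but there are many of them to dispatch before the fully faithful claim can be concluded.
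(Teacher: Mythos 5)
Your proposal follows essentially the same route as the paper: the cardinality functor induced by $P\to uCom$ via Example~\ref{ex:c-ucom}, local terminals given by the identity operations, fibres read off from the upper vertices of 2-level trees, and full faithfulness deduced from the fact that a morphism in $\mathcal{C}(P)$ is determined by its target and its fibres. The only piece of structure you leave implicit is the action of the fibre functors on \emph{morphisms} of slice categories (the paper's $g^f_i$, the 2-level subtree extracted from the height-3 composite tree with leaf indices shifted by the order-preserving bijection), but this is forced by your setup, and the paper likewise delegates the remaining axiom checks to the reader.
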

\begin{proof}
Operadic categories were introduced in \cite{Batanin2015operadic}. Here we use the definition and notation from \cite{garner2021operadic}. 

Let $P$ be an operad. The local terminal objects in $\mathcal{C}(P)$ are defined to be the operations $id_c$ for all colours $c$ of $P$: for any operation $p$ in $P$ there is exactly one morphism in $\mathcal{C}(P)$ from $p$ to some $id_c$, namely to $id_{c_0}$, where $c_0$ is the output colour of $p$.

The cardinality functor $|-|:\mathcal{C}(P)\to \mathcal{S}$ is the functor from Example~\ref{ex:c-ucom}. It sends an operation $p$ of arity $n$ to $\underline{n}$, and sends a 2-level tree $f:q\to p$ to the set map $|f|$ such that for all $i$ the preimage of $i$ consists of all the indices of leaves above the $i$-th upper vertex of $f$.

The fibre functors are defined as follows. On objects the functor $\varphi_{p,i}$ sends a 2-level tree $f:q\to p$ to the operation $q_i$ that marks the $i$-th upper vertex of $f$. On morphisms $\varphi_{p,i}$ sends a morphism $g:r\to q$ over $f:q\to p$ to the 2-level tree $g^f_i$ with the lowest vertex marked by $q_i$; with upper vertices marked consecutively by the operations $r_j$ that mark the vertices of $g$ for all $j$ that are the indices of leaves above the $i$-th vertex of $f$; with indices above the leaves of these vertices $r_j$ the same as in $g$, except these indices are shifted via the order preserving bijection with $|gf^{-1}(i)|$. In other words, the 2-level tree $g^f_i$, up to the shift of indices, is subtree of the tree of height $3$ that corresponds to the composition $g\circ f$, this subtree containing the vertex marked by $q_i$ and all the vertices above it. 

The verification that the map $\varphi_{p,i}$, of the axioms of operadic category and of the fully faithfulness of the functor is left to the reader, as it is straightforward. Notice that the action of the functors $\varphi_{p,i}$ on the indices of leaves is the same as in the category $\mathcal{S}$. And since $\mathcal{S}$ is an operadic category, one does not have to check that the maps on indices of leaves satisfy the axioms. To check the second half of the axiom (A5) one might want to draw the tree of total height $4$ that corresponds to a composition $hgf$. For the fully faithfulness notice that any morphism in $\cC(P)$ is determined by its fibers. 
\end{proof}

\section{Graph-based categories}
\label{sec:graphs}

Next we consider the opposites of the categories $\mathcal{C}(P)$ for the operads $P$ that encode planar, symmetric, cyclic, modular and genus-graded modular operads. Description of these operads can be found in \cite[Section 1.1]{burkin2022twisted}. Operations of these operads, and thus objects in the corresponding categories $\mathcal{C}(P)$, are graphs with half-edges endowed with additional structure. To avoid confusion, the operations of these operads whose underlying graphs are trees will be called \emph{operadic trees}, while the trees that correspond to morphisms in categories $\mathcal{C}(P)$ will be called \emph{2-level trees}.

\begin{definition}
    Let $P$ be an operad. Its suboperad $nuP$ consists of all non-unital operations of $P$, i.e.\@ of operations of non-zero arity.
\end{definition}

To be precise, we are interested in the subcategories $\mathcal{C}(nuP)^{op}$ of categories $\mathcal{C}(P)^{op}$. This is justified by Lemma~\ref{lem:r-zero-to-full} that shows that if the number of operations of arity $0$ in $P$ is finite and $\mathcal{C}(nuP)^{op}$ is quasi-Gr\"obner then $\mathcal{C}(P)^{op}$ is quasi-Gr\"obner. 

There is a sequence of inclusions of operads $pOp \hookrightarrow sOp \hookrightarrow cOp \hookrightarrow mOp_{(g,n)}$. This gives a sequence $\mathcal{C}(nupOp)\hookrightarrow \mathcal{C}(nusOp)\hookrightarrow\mathcal{C}(nucOp)\hookrightarrow\mathcal{C}(numOp_{(g,n)})$ of inclusions of categories. Next we describe the category $\mathcal{C}(numOp_{(g,n)})^{op}$.

\begin{definition}
A graph with half edges is a finite set $V$ of vertices, a finite set $H$ of half-edges, an involution $inv$ on $H$, and the adjacency map $t:H\to V$. A fixed point of the involution is called a leaf. A two-element orbit of the involution is called an edge. The set $V$ together with the set of edges can be seen as the usual graph, where an edge $\{h_1, h_2\}$ connects the vertices $t(h_1)$ and $t(h_2)$.
\end{definition}

\begin{definition}
An operadic graph is a graph with half edges $(V,H,inv,t)$ together with an order on the leaves, for each vertex $v$ an order on the set $t^{-1}(v)$, and an order on vertices. The orders on half-edges and on leaves will often be given by bijection with sets $\{0,\dots,n\}$, while the order on vertices will be given by bijection with sets $\{1,\dots,n\}$.
\end{definition}

\begin{definition}
    A corolla is an operadic graph that is a tree with one vertex and with any permutation on leaves.
\end{definition}

\begin{proposition}
The category $\mathcal{C}(numOp_{(g,n)})^{op}$ has the following concrete description. Its objects are operadic graphs endowed additionally with a genus map $g:V\to\mathbb{N}$ from the vertices of the graph. A morphism $f:p\to q$, which is given by a 2-level tree with the root vertex marked by $p$ and the upper vertices marked by some operations $q_l$, corresponds to embedding of operadic graphs $q_l$ into the vertices of $p$. In other words the operadic graph $q$ is the union of operadic graphs $q_l$ according to $f$, in the following way. For all $l$ the number of leaves of the operadic graph $q_l$ coincides with the number of half-edges adjacent to the $l$-th vertex of $p$, and the genus of the $l$-th vertex of $p$ is equal to $(\sum_{v\in q_l} g(v) + g(q_l))$, where $g(v)$ is the genus of the vertex $v$ of $q_l$ and $g(q_l)$ is the genus of the graph of $q_l$. If the $i$-th half-edge of the $j$-th vertex in $p$ is connected to the $k$-th half-edge of the $l$-th vertex in $p$, then $f$ connects the $i$-th leaf of $q_j$ to the $k$-th leaf of $q_l$. If the $i$-th half-edge of the $j$-th vertex in $p$ is the $k$-th leaf of $p$, then the $i$-th leaf of $q_j$ becomes the $k$-th leaf of $q$. Additionally $f$ determines the order on vertices of $q$ according to the indices of leaves of the 2-level tree of $f$. The isomorphisms in $\mathcal{C}(numOp_{(g,n)})^{op}$ are precisely the morphisms such that the inserted graphs $q_j$ are corollas.
\end{proposition}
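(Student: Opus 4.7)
The proof is an unwinding of two definitions: that of the functor $\mathcal{C}$ from Section~\ref{sec:operadic-categories} and that of operadic composition in $mOp_{(g,n)}$ as recalled in \cite[Section~1.1]{burkin2022twisted}. The plan is to match, piece by piece, the data carried by a 2-level tree with root marked by $p$ and upper vertices marked by $q_1,\dots,q_k$ with the data of an insertion of graphs into vertices of $p$.

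I would start with the object level, which is immediate: objects of $\mathcal{C}(numOp_{(g,n)})^{op}$ coincide with those of $\mathcal{C}(numOp_{(g,n)})$, and by construction these are the non-unital operations of $mOp_{(g,n)}$, which are exactly operadic graphs equipped with a genus map $V\to\mathbb{N}$. For morphisms, the definition of $\mathcal{C}$ says that a morphism $q\to p$ in $\mathcal{C}(numOp_{(g,n)})$, i.e.\@ a morphism $p\to q$ in the opposite category, is a 2-level tree as described, subject to the constraint that $q$ equals the operadic composition of $p$ with the $q_l$'s twisted by the permutation on leaves. Thus the content of the proposition is a concrete description of what this operadic composite is.

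I would next recall how operadic composition in $mOp_{(g,n)}$ acts on graphs: to form $p\circ(q_1,\dots,q_k)$ one replaces the $l$-th vertex of $p$ by the graph of $q_l$, identifying the $i$-th half-edge adjacent to that vertex with the $i$-th leaf of $q_l$. The resulting graph of $q$ then has as edges both the old internal edges of the $q_l$'s and the edges inherited from $p$ via this identification, while its remaining leaves are the unabsorbed leaves of the $q_l$'s, ordered by the half-edges-to-leaves correspondence in $p$ and then reindexed by the permutation on leaves of the 2-level tree. The vertices of $q$ are obtained by concatenating the vertex sets of the $q_l$'s in the planar order of the upper vertices. The genus map of $q$ is the concatenation of the genus maps of the $q_l$'s, while the genus at the $l$-th vertex of $p$ is recovered as $\sum_{v\in V(q_l)}g(v)+g(q_l)$, with $g(q_l)$ the first Betti number of the graph of $q_l$. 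These are exactly the conditions stated in the proposition, and they are equivalent to the defining axioms of composition in $mOp_{(g,n)}$, so no extra verification is needed beyond this translation.

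For the characterization of isomorphisms, I would argue as follows. If every $q_l$ is a corolla, then inserting $q_l$ into the $l$-th vertex of $p$ amounts to a relabelling of that vertex together with a permutation of its adjacent half-edges; the full composite $q$ is then obtained from $p$ by bijective relabelling of vertices, half-edges and leaves, and the corresponding 2-level tree is manifestly invertible. Conversely, any isomorphism must preserve the number of vertices, so $|V(q)|=\sum_l|V(q_l)|=|V(p)|$; since each $q_l$ is non-unital, its graph carries at least one vertex, which forces each $q_l$ to have exactly one vertex and hence to be a corolla. The main obstacle in writing this out carefully is not conceptual but notational: one has to keep track of the three simultaneous orderings (on vertices of $q$, on half-edges at each vertex, and on leaves of $q$) and check that the planar order of upper vertices and the permutation on leaves of the 2-level tree determine these consistently with the operadic composition axioms in $mOp_{(g,n)}$.
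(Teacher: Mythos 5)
The paper offers no proof of this proposition at all---it is stated as a direct unwinding of the definition of the functor $\mathcal{C}$ and of operadic composition in $mOp_{(g,n)}$---so your plan of matching the 2-level-tree data against graph insertions is exactly the intended reading, and your object-level and morphism-level translations (leaves of $q_l$ matched with half-edges at the $l$-th vertex, edges of $q$ coming from edges of the $q_l$ plus edges inherited from $p$, genus additivity, reindexing by the permutation on leaves) are correct.

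There is, however, a genuine gap in your converse for the characterization of isomorphisms. You argue that an isomorphism preserves the number of vertices, hence each inserted $q_l$ has exactly one vertex, ``and hence is a corolla.'' In $mOp_{(g,n)}$ a one-vertex non-unital operation need not be a corolla: the paper defines a corolla as a \emph{tree} with one vertex, whereas a one-vertex operadic graph may carry loops (the proof of Theorem~\ref{thm:mopgn-grobner} explicitly works with ``a graph with one vertex of degree $1$ and one loop''). Inserting such a $q_l$ into a vertex of genus at least $g(q_l)$ is a legitimate morphism that preserves the vertex count but is not invertible, since no insertion can remove a loop. To close the gap, note that the set of internal edges is also monotone under composition: the edges of the composite graph are the internal edges of the inserted graphs together with those inherited from the source, so the edge count can only grow along morphisms. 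An isomorphism therefore preserves the number of edges as well, forcing each $q_l$ to have no internal edges; combined with $|V(q_l)|=1$ this does make each $q_l$ a corolla. With that one additional observation your argument is complete.
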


\begin{remark}
Instead of categories $\mathcal{C}(P)^{op}$ one may want to consider the wide subcategories $Active(P)$ of active morphisms of categories $\Tw(P)$ described in the next section. Here an active morphism is a morphism such that the lowest vertex in the corresponding 3-level tree has arity 1 and is marked by an invertible operation. Notice though that if an operad $P$ is such that for any colour $c$ of $P$ the number of invertible operations in $P$ with input colour $c$ is finite, then the inclusion of $\mathcal{C}(P)^{op}$ into $Active(P)$ is essentially surjective and satisfies property (F), and thus if $\mathcal{C}(P)^{op}$ is quasi-Gr\"obner then $Active(P)$ is quasi-Gr\"obner. This finiteness condition always holds for the operads that we consider.
\end{remark}

\begin{remark}
For any operad $P$ the image of the cardinality functor $|-|:\mathcal{C}(nuP)\to\mathcal{C}(uCom)\cong\mathcal{S}$ lies in the subcategory $\mathbf{FS}$ of finite surjections. The functor $|-|^{op}:\mathcal{C}(numOp_{(g,n)})^{op}\to\mathbf{FS}^{op}$ sends an operadic graph with $n$ vertices to $\{1,\dots,n\}$ and a morphism $f:p\to q$ that inserts $q_l$ into the $l$-th vertex of $p$ to the opposite of the map $h$ such that the preimage $h^{-1}(l)$ is the set of indices of vertices of the subgraph $q_l$ of $q$.
\end{remark}

Let $pOp$ be the operad of planar operads. The category $\mathcal{C}(pOp)^{op}$ is equivalent to the subcategory of active morphisms of the category $\Omega_{pl}$, the planar version of the Moerdijk--Weiss dendroidal category $\Omega$. The objects of $\mathcal{C}(nupOp)^{op}$ are the operadic graphs that are planar rooted trees with half-edges, with vertices indexed from $1$ to $n$, with leaves indexed in planar order. A morphism $f:p\to q$ in $\mathcal{C}(nupOp)^{op}$ embeds an operadic tree $q_j$ into the $j$-th vertex $v_j$ of the operadic tree $p$ for all $j$, and indexes the vertices of $q$ so that $v\prec v'$ in $q$ if $v$ and $v'$ belong to the same subgraph $q_j$ and $v\prec v'$ as the vertices of $q_j$, or if $v\in q_i$, $v'\in q_j$ and  $v_i\prec v_j$ as the vertices of $p$. 

\begin{theorem}
\label{thm:pop-grobner}
The category $\mathcal{C}(nupOp)^{op}$ is quasi-Gr\"obner.
\end{theorem}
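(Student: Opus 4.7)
The plan is to construct a Gröbner category $\mathcal{G}$ together with an essentially surjective functor $\Phi : \mathcal{G} \to \mathcal{C}(nupOp)^{op}$ satisfying property (F). The structural observation driving the proof is that any morphism $f : T \to T'$ in $\mathcal{C}(nupOp)^{op}$ is encoded by a family $(T_v')_{v \in V(T)}$ of planar operadic subtrees, one inserted at each vertex $v$ of $T$, with the number of leaves of $T_v'$ matching the arity of $v$. Consequently the slice poset $|T/\mathcal{C}(nupOp)^{op}|$ decomposes as a product, indexed over the vertices of $T$, of posets of planar operadic trees with a fixed number of leaves, ordered by ``is an expansion of''.

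\textbf{Step 1.} We first treat the full subcategory $\mathcal{E} \subset \mathcal{C}(nupOp)^{op}$ on trees whose vertices all have arity at least one and show directly that $\mathcal{E}$ is Gröbner. For property (G1), order the morphisms out of $T$ lexicographically over the vertices of $T$ in the given vertex order, comparing the inserted subtrees by an inductively constructed well-order on planar operadic trees; composition respects this order. For property (G2), each factor in the product decomposition above becomes a poset of planar operadic trees with a fixed \emph{positive} number of leaves ordered by expansion, and this is exactly the object of the main lemma of \cite{barter2015noetherianity}, yielding Noetherianity of each factor and hence of the finite product.

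\textbf{Step 2.} To handle objects containing arity-$0$ vertices (stumps), we build an auxiliary category $\mathcal{G}$ whose objects are planar operadic trees equipped with bookkeeping data at each vertex recording where eventual stumps may appear, and whose morphisms track this data across expansions. The category $\mathcal{G}$ is Gröbner by an argument parallel to Step 1, since its slice posets still decompose as finite products whose factors are kept Noetherian by the bookkeeping restriction. We then define $\Phi : \mathcal{G} \to \mathcal{C}(nupOp)^{op}$ by materialising the bookkept stumps; essential surjectivity is immediate, and property (F) holds because any morphism out of $T$ introduces only finitely many canonical stump-patterns at each vertex of $T$, so that finitely many factoring morphisms suffice.

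\textbf{Main obstacle.} The key difficulty is Step 2 and, specifically, the handling of stumps. The naive poset of planar operadic trees with zero leaves fails to be Noetherian under expansion: stumps can proliferate freely and produce infinite antichains, which obstructs any direct Gröbner argument on the whole category. The bookkeeping data in $\mathcal{G}$ is introduced precisely to tame this proliferation, reducing the problem to finite products of Noetherian factors, while leaving property (F) as the mechanism to absorb the actual combinatorial choice of stump placement.
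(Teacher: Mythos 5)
The decisive gap is in Step 2, and it rests on a false premise. You assert that the poset of planar operadic trees with zero leaves, ordered by expansion, admits infinite antichains and hence is not Noetherian. It \emph{is} Noetherian: this is exactly the content of the relative planar Kruskal tree theorem of \cite{barter2015noetherianity}, and supplying the translation between the expansion order on operadic trees containing arity-$0$ vertices and the order on the category $\mathbf{PT}$ of that paper is the entire substance of the proof of (G2) here (the paper does this via a pair of functors $\mathbf{PT}\to\mathcal{D}$ and $\mathcal{D}\to\mathbf{PT}$ whose composite is the identity and which induce full and faithful functors on slice categories). Having denied this fact, you replace it by a category $\mathcal{G}$ of ``trees with bookkeeping data'' that is never defined, and both properties you need of it are unsubstantiated and implausible as stated: (i) a morphism out of a fixed tree $T$ may insert trees containing arbitrarily many arity-$0$ vertices in infinitely many combinatorially inequivalent configurations, so there is no finite list of ``canonical stump-patterns'' through which every morphism factors, and property (F) for your $\Phi$ would fail; (ii) even granting some such reduction, one must still prove Noetherianity of a poset of trees that contain stumps, which is precisely the hard step and is nowhere addressed. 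The key input --- the relative Kruskal theorem --- is missing, and the proposed workaround cannot replace it.

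Step 1 also has problems, though of a more repairable kind. First, your $\mathcal{E}$ is a \emph{full} subcategory, so it retains the isomorphisms of $\mathcal{C}(nupOp)^{op}$ that permute the indices of vertices; an object with a non-identity automorphism of order $2$ cannot carry an admissible order, so $\mathcal{E}$ does not satisfy (G1) as stated. One must first pass to the (equivalent, property-(F)) subcategory $\mathcal{D}$ of trees whose vertices carry the canonical depth-first order, as the paper does. Second, even on $\mathcal{D}$ your lexicographic order is not obviously admissible: for $f,f'\colon T\to T'$ and $g$ out of $T'$, the trees inserted by $g\circ f$ and $g\circ f'$ at a vertex $v$ of $T$ are expansions of $f_v$ and $f'_v$ by \emph{different} restrictions of $g$, since $f$ and $f'$ partition the vertices of $T'$ differently; ``composition respects this order'' therefore needs an argument, which the paper avoids altogether by pulling back an admissible order along the faithful cardinality functor $\mathcal{D}\to\mathbf{OS}^{op}$. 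Your product decomposition of the slice posets is correct, and in the all-arities-positive case the factors are Noetherian by the elementary argument the paper sketches for $\mathcal{D}_+$ (finitely many homeomorphism types plus non-decreasing counts of arity-$1$ vertices); no appeal to \cite{barter2015noetherianity} is needed there. The place where Barter's theorem is genuinely required is exactly the case you defer to Step 2.
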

\begin{proof}
The cardinality functor $|-|:\mathcal{C}(nupOp)^{op}\to\mathbf{FS}^{op}$ is faithful. Indeed, a morphism $h:p\to q$ in $\mathcal{C}(nupOp)^{op}$ substitutes certain operadic trees $q_j$ into the vertices of $p$ and assigns order to the vertices of $q$. Such a morphism $h$ is determined by the operations $p$ and $q$ and by the partition of the planar tree of $q$ into subtrees $q_j$, and this partition is determined by the indices of the vertices of the subtrees $q_j$ of $q$, i.e.\@ it is determined by $|h|$.

Let $\mathcal{D}$ be the full subcategory of $\mathcal{C}(nupOp)^{op}$ on operadic trees with vertices ordered in the clockwise depth-first search order starting from the root. The inclusion $\mathcal{D}\to\mathcal{C}(nupOp)^{op}$, an equivalence of categories, is essentially surjective and satisfies property (F). Observe that the image of the restriction of $|-|:\mathcal{C}(nupOp)^{op}\to\mathbf{FS}^{op}$ to $\mathcal{D}$ lies in $\mathbf{OS}^{op}$. Since $|-|:\mathcal{D}\to\mathbf{OS}^{op}$ is faithful and $\mathbf{OS}^{op}$ satisfies property (G1), by Lemma~\ref{lem:faithful-g1} the category $\mathcal{D}$ satisfies property (G1).

It remains to prove that $\mathcal{D}$ satisfies property (G2), and this is done in the lemma below. The main difficulty is posed by the operadic trees that contain vertices of arity $0$ (i.e.\@ vertices without input edges, equivalently vertices of total degree~$1$). If one is interested only in the full subcategory $\mathcal{D}_+$ of $\mathcal{C}(nupOp)^{op}$ on trees without vertices arity $0$, then one may proceed as follows. Let $f_i$ be a sequence of morphisms in $\mathcal{D}_+$ from the same object $p$. Let $q_{ji}$ be the operadic tree that is substituted into the $j$-th vertex of $p$ under the morphism $f_i$. For any fixed $j$ the trees $q_{ji}$ have the same number of input leaves, and we can choose a subsequence of $f_i$ such that the trees $q_{ji}$ are all homeomorphic to each other and differ only by the number of vertices of degree 2. We can further choose a subsequence such that the corresponding numbers of vertices of degree 2 between any two adjacent vertices of degree not equal to 2, and also the numbers of vertices of degree 2 between leaves and vertices of degree not equal to 2, are non-decreasing. Doing this for each $j$, we get a non-decreasing sequence of morphisms $f_i$.
\end{proof}

\begin{lemma}
The property (G2) for $\mathcal{D}$ is equivalent to the relative Kruskal's tree theorem proved in \cite{barter2015noetherianity}, i.e.\@ to the property (G2) for the category $\mathbf{PT}$ of that work. 
\end{lemma}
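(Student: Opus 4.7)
My strategy is to exhibit, for every object $p$ of $\mathcal{D}$, an order isomorphism between the slice poset $|p/\mathcal{D}|$ and the slice poset in $\mathbf{PT}$ at the corresponding object. Once this dictionary is in place, Noetherianity of the two posets is literally the same statement, and property~(G2) for $\mathcal{D}$ coincides with property~(G2) for $\mathbf{PT}$, which is the relative Kruskal tree theorem of \cite{barter2015noetherianity}.

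First I would identify the objects. An object of $\mathcal{D}$ is an operadic tree, i.e.\@ a planar rooted tree with half-edges equipped with a vertex ordering; but the definition of $\mathcal{D}$ forces this vertex ordering to be the clockwise depth-first search order from the root, which is already determined by the planar structure. So an object of $\mathcal{D}$ is no more than a planar rooted tree with half-edges, and this is precisely the data of an object of $\mathbf{PT}$.

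Next I would match morphisms and the preorder $\le$ on them. A morphism $f:p\to q$ in $\mathcal{D}$ encodes a decomposition of $q$ by inserting operadic subtrees $q_j$ at the $j$-th vertex of $p$, which is exactly an embedding of $p$ into $q$ of the kind that defines morphisms in $\mathbf{PT}$. The DFS normalization ensures that the vertex order on $q$ induced by such a morphism coincides with the one attached to $q$ as an object of $\mathcal{D}$, so no extra indexing data is introduced. If $f'=h\circ f$ for some $h:q\to q'$, then $q'$ is obtained from $q$ by further subtree insertions, matching composition—and hence the preorder $\le$—in $\mathbf{PT}$. Passing to the quotient by $\sim$, this yields an order isomorphism $|p/\mathcal{D}|\cong|p/\mathbf{PT}|$, and the two (G2) statements become interchangeable.

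The main obstacle is careful bookkeeping: I need to verify that the half-edge and leaf-indexing data translate correctly under this dictionary, and in particular that the equivalence relation on morphisms out of $p$ (identifying $f$ and $f'$ with isomorphic targets) matches exactly on both sides, so that the \emph{posets}, not merely the preorders, coincide. I also need to confirm that the composition preorder on $\mathcal{D}$, inherited from the operad $nupOp$, is neither finer nor coarser than the embedding preorder used by Barter. Once these compatibilities are checked, the equivalence of properties is immediate in both directions.
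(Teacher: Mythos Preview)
Your identification of objects is where the argument breaks. Objects of Barter's $\mathbf{PT}$ are planar rooted trees in the bare combinatorial sense---vertices and edges only, no half-edges---whereas objects of $\mathcal{D}$ are operadic trees carrying a root half-edge and a (possibly empty) set of indexed input leaves. These are not the same data, and the two categories are not equivalent. A morphism $p\to q$ in $\mathcal{D}$ must preserve the number of input leaves (the inserted subtree $q_j$ has exactly as many leaves as the $j$-th vertex of $p$ has incoming half-edges), while morphisms in $\mathbf{PT}$ carry no such constraint. So there is no order isomorphism $|p/\mathcal{D}|\cong|T/\mathbf{PT}|$ of the kind you propose; the preorders genuinely differ, not merely in bookkeeping.

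The paper resolves this by treating the two directions with two different functors rather than a single dictionary. A functor $G:\mathbf{PT}\to\mathcal{D}$ adjoins a root half-edge (and no input leaves); together with a faithful $J:\mathcal{D}\to\mathbf{PT}$ that deletes the root half-edge and caps each input leaf with a fresh arity-$0$ vertex, one gets $J\circ G=\mathrm{id}$, hence $G$ is fully faithful and $\mathbf{PT}$ sits inside $\mathcal{D}$ as a full subcategory. This gives (G2) for $\mathcal{D}$ $\Rightarrow$ (G2) for $\mathbf{PT}$. For the converse one shows that the induced functor $J':p/\mathcal{D}\to J(p)/\mathbf{PT}$ is fully faithful: any $\mathbf{PT}$-morphism between objects in the image of $J'$ must fix the cap vertices and therefore comes from $\mathcal{D}$. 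This embeds $|p/\mathcal{D}|$ as a sub-poset of $|J(p)/\mathbf{PT}|$, and Noetherianity passes to sub-posets. Your final paragraph correctly flags the leaf bookkeeping as the crux, but the fix is this pair of one-way embeddings, not a single bijection.
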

\begin{proof}
There is a functor $G:\mathbf{PT}\to\mathcal{D}$ that sends a planar rooted tree $T$ to the same tree  (without input leaves and with the root leaf added to the root vertex) and sends a morphism $f:T\to T'$ to the morphism that for each vertex $v$ of $T$ embeds into $v$ the maximal subtree of $T'$ that contains only the vertices above or equal to $f(v)$ and, for all the vertices $w$ that are the children of $v$, does not contain the vertices $f(w)$, yet contains the half-edges directly below the vertices $f(w)$.

There is a faithful functor $J:\mathcal{D}\to\mathbf{PT}$: on objects it removes the root leaf and adds a vertex of arity $0$ to each input leaf, on morphisms it sends $f:p\to q$ to the map of planar trees that sends a vertex $v$ of $p$ to the lowest vertex in the subtree of $q$ that is embedded into $v$ by $f$, and sends the vertices above the leaves of $p$ to the corresponding vertices above the leaves of $q$. 

The composition $J\circ G$ is the identity. Since $J$ is faithful, $G$ is fully faithful. This allows to view $\mathbf{PT}$ as a full subcategory of $\mathcal{D}$. In particular, if $\mathcal{D}$ satisfies property (G2), then $\mathbf{PT}$ satisfies property (G2).

For any object $p$ in $\mathcal{D}$ let $J':p/\mathcal{D} \to J(p)/\mathbf{PT}$ be the functor induced by $J$. Since $J$ is faithful, $J'$ is faithful. Observe that $J'$ is full. Indeed, $J$ adds vertices to the leaves of operadic trees in $\mathcal{D}$, and any morphism in $J(p)/\mathbf{PT}$ between the objects in the image of $J'$ has to preserve such new vertices, which allows to recover its preimage in $p/\mathcal{D}$. The fully faithfulness of $J'$ implies that if $\mathbf{PT}$ satisfies (G2), then $\mathcal{D}$ satisfies (G2).
\end{proof}

\begin{corollary}
\label{thm:sop-grobner}
The category $\mathcal{C}(nusOp)^{op}$ is quasi-Gr\"obner.
\end{corollary}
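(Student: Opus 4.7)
The plan is to reduce to Theorem~\ref{thm:pop-grobner} via the functor $\Phi:\mathcal{C}(nupOp)^{op}\to\mathcal{C}(nusOp)^{op}$ induced by the inclusion of operads $pOp\hookrightarrow sOp$. Since $\mathcal{C}(nupOp)^{op}$ is quasi-Gr\"obner by Theorem~\ref{thm:pop-grobner}, the remark following the definition of quasi-Gr\"obner category reduces the problem to showing that $\Phi$ is essentially surjective and satisfies property~(F).

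Essential surjectivity is immediate: every symmetric operadic tree admits a planar structure, so for any object $q$ of $\mathcal{C}(nusOp)^{op}$, choosing a linear order on the input half-edges at each vertex produces a planar operadic tree $p$ with $\Phi(p)\cong q$.

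For property~(F), I fix $q$ in $\mathcal{C}(nusOp)^{op}$. The set of planarizations of $q$ is finite, yielding planar operadic trees $p_1,\dots,p_N$ and isomorphisms $f_i:q\to\Phi(p_i)$ in $\mathcal{C}(nusOp)^{op}$. I claim these witness property~(F). Indeed, any morphism $f:q\to\Phi(p')$ in $\mathcal{C}(nusOp)^{op}$ realizes $\Phi(p')$ as the insertion of certain symmetric subtrees $u_l$ into the $l$-th vertex of $q$. The planar structure on $p'$ then endows each $u_l$ with a planar structure and, by transferring the planar order on the leaves of $u_l$ via their identification with the input half-edges of the $l$-th vertex of $q$, induces a planar order at that vertex of $q$. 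Assembling these choices over all vertices yields a specific planarization $p_i$ of $q$, and $f$ factors as $\Phi(g)\circ f_i$, where $g:p_i\to p'$ is the lift to a planar insertion.

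The hard part will be verifying compatibility of all the orderings in detail. The symmetric morphism $f$ can involve nontrivial permutations of leaf indices and of vertex labels of $\Phi(p')$, none of which is directly visible from the planar structures of $p_i$ and $p'$. These permutation data must be absorbed into the isomorphism $f_i$ so that the lifted $g$ is a well-defined morphism in $\mathcal{C}(nupOp)^{op}$ with $\Phi(g)\circ f_i=f$. This bookkeeping is analogous to the compatibility checks implicit in the proof of Theorem~\ref{thm:pop-grobner}, and it is also where the proof would need to distinguish carefully between planar and symmetric data when the tree has vertices of small arity admitting nontrivial symmetries.
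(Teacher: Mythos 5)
Your route is genuinely different in packaging from the paper's, though it rests on the same combinatorial fact. The paper does not map $\mathcal{C}(nupOp)^{op}$ into $\mathcal{C}(nusOp)^{op}$; instead it carves out an intermediate wide-ish subcategory $\mathcal{D}'$ of $\mathcal{C}(nusOp)^{op}$ (objects: operadic trees with arbitrary leaf permutations and DFS vertex order; morphisms: insertions of \emph{planar} trees), proves the inclusion $\mathcal{D}'\to\mathcal{C}(nusOp)^{op}$ is essentially surjective and satisfies (F) using exactly your decomposition --- every operation of $sOp$ is $c\circ_1 r$ with $c$ a corolla and $r$ planar, so every morphism factors as (isomorphism)$\circ$(planar insertion) --- and then shows $\mathcal{D}'$ is \emph{Gr\"obner} outright, because the leaf-forgetting functor $\mathcal{D}'\to\mathcal{D}$ is a discrete opfibration and Lemma~\ref{lem:discrete-opfibrations-grobner} applies. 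Your version pushes the same factorization through property (F) of the single functor $\Phi$ induced by $pOp\hookrightarrow sOp$ and then invokes the remark that quasi-Gr\"obnerness transports along essentially surjective (F)-functors. Both are valid; what the paper's version buys is a concrete Gr\"obner category $\mathcal{D}'$ sitting inside $\mathcal{C}(nusOp)^{op}$, which is reused later in the proof of Theorem~\ref{thm:mopgn-grobner}, and it confines all the permutation bookkeeping to the clean statement that a discrete opfibration over a Gr\"obner category is Gr\"obner.

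One point of care in your argument: a single isomorphism $f_i$ per planar structure on $q$ is not quite enough. A morphism $f:q\to\Phi(p')$ carries, besides the local reorderings at each vertex, a global permutation of leaf indices and an arbitrary re-indexing of the vertices of the target, and these cannot all be absorbed into one fixed $f_i$ with a fixed target $\Phi(p_i)$. The witnesses for property (F) should be taken to be \emph{all} isomorphisms from $q$ to objects in the image of $\Phi$ (still finitely many, since there are finitely many local orderings, vertex orderings, and leaf relabelings); this is the same device the paper uses for the inclusion $\mathcal{Z}\to\mathcal{C}'$ in Theorem~\ref{thm:mopgn-grobner}. With that adjustment your deferred ``bookkeeping'' goes through and the proof is correct.
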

\begin{proof}
Let $\mathcal{D}'$ be the subcategory of $\mathcal{C}(nusOp)^{op}$ with objects being operadic trees with any permutation on leaves and with permutation on vertices given by the clockwise depth-first search order starting from the root; and with morphisms being such that the upper vertices in the 2-level trees that represent these morphisms belong to $pOp$, i.e.\@ the corresponding operations $q_j$ are operadic trees with trivial permutation on leaves. Notice that any operation in $sOp$ can be represented as $c\circ_1 r$ where $c$ is a corolla and $r$ is an operation in $pOp$. This implies that the inclusion $\mathcal{D}'\to\mathcal{C}(nusOp)^{op}$ is essentially surjective and satisfies property (F): any morphism $p\to q$ in $\mathcal{C}(nusOp)^{op}$ is a composition of an isomorphism that permutes input edges of vertices of the operadic tree $p$ by inserting corollas (and permutes the order on vertices) and of a morphism from $\mathcal{D}'$. 

Let $\mathcal{D}$ be the Gr\"obner category from Theorem~\ref{thm:pop-grobner}. The functor $\mathcal{D}'\to \mathcal{D}$ that forgets the indices of leaves of trees is a discrete opfibration. By Lemma~\ref{lem:discrete-opfibrations-grobner} the category $\mathcal{D}'$ is Gr\"obner.
\end{proof}

\begin{proposition}
The inclusion $\mathcal{C}(nusOp)\to\mathcal{C}(nucOp)$ induced by the inclusion of operads $sOp\to cOp$ is an equivalence of categories.
\end{proposition}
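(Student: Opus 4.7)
The plan is to show the inclusion is both fully faithful and essentially surjective. I will work throughout with the concrete description analogous to the one given in this section for $\mathcal{C}(numOp_{(g,n)})^{op}$: objects of $\mathcal{C}(nucOp)$ are operadic trees whose vertices carry a cyclic (as opposed to merely symmetric) structure on their incident half-edges, and the generators of $cOp$ not lying in $sOp$ are the $\tau$-rotations at each vertex, which become isomorphisms in $\mathcal{C}(nucOp)$ by inserting a single corolla at the vertex in question.

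For essential surjectivity, given a cyclic operation $q$, I would construct an isomorphism $p \to q$ in $\mathcal{C}(nucOp)$ with $p$ in the image of $\mathcal{C}(nusOp)$ by inserting at each vertex of $q$ a cyclic-rotation corolla that realigns the distinguished output half-edge of the vertex with the half-edge pointing towards the root of the tree. Since each inserted subgraph is a corolla, this morphism is an isomorphism by the explicit characterization of isomorphisms recalled earlier in the section, and the resulting operation $p$ has every vertex oriented root-wards, so it lies in $\mathcal{C}(nusOp)$.

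Fully faithfulness splits into two parts. Faithfulness is immediate because $sOp \to cOp$ is an inclusion of operads, so the induced functor $\mathcal{C}$ acts injectively on 2-level trees. For fullness, let $f : p \to q$ be a morphism in $\mathcal{C}(nucOp)$ with $p$ and $q$ symmetric, given by inserting cyclic operations $q_i$ into the vertices of $p$. The root of each $q_i$ is glued to the root-facing half-edge of the corresponding vertex of $p$; since $q$ is symmetric (every vertex-output in $q$ points along the global path to the root), every vertex of $q_i$ has its output aligned with the path to $q_i$'s own root, so each $q_i$ lies in $nusOp$ and $f$ comes from $\mathcal{C}(nusOp)$.

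The only step requiring genuine verification is essential surjectivity: one must check that the family of vertexwise rotations assembles into a single well-defined morphism in $\mathcal{C}(nucOp)$ (i.e.\@ a 2-level tree whose upper vertices are corollas with appropriate $\tau$-twists and whose composition is exactly $q$). Fullness and faithfulness are essentially forced by the combinatorics of graph insertions and by the fact that the operad inclusion is injective on operations.
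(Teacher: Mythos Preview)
Your argument is correct and follows the same two-step approach as the paper: essential surjectivity via insertion of cyclically permuted corollas at each vertex, and fullness via the observation that if the composite of cyclic operations $q_i$ substituted into a symmetric $p$ is again symmetric, then every vertex of each $q_i$ must have its output half-edge pointing rootward, forcing $q_i\in nusOp$. The paper states both steps more tersely and does not separate out faithfulness (it is absorbed into the phrase ``full subcategory''), but the content is the same; your only addition is the explicit remark that the vertexwise rotations assemble into a single 2-level tree, which the paper leaves implicit.
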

\begin{proof}
Any object $p$ in $\mathcal{C}(nucOp)$ is isomorphic to an object of $\mathcal{C}(nusOp)$ via the morphism that substitutes  into the vertices of $p$  corollas with cyclically permuted leaves. For any object $p$ in $\mathcal{C}(nusOp)$ if operations $q_j$ from $nucOp$ are such that the substitution of these into $p$ gives an object in $\mathcal{C}(nusOp)$, then the operations $q_j$ are in $nusOp$. Thus $\cC(nusOp)$ is a full subcategory of $\cC(nucOp)$.
\end{proof}

The next two categories are closely related to the operadic category $\mathtt{Gr}$ from \cite[Section~3]{batanin2018operadic} and also to the opposite of the subcategory of active morphisms of the category $U$ from \cite{hackney2020graphical}.

\begin{proposition}
The category $\mathcal{C}(numOp)^{op}$ is not quasi-Gr\"obner.
\end{proposition}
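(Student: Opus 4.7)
The plan is to show that $\mathcal{C}(numOp)^{op}$ admits a finitely generated module that fails to be Noetherian; since quasi-Gr\"obner categories have locally Noetherian module categories by the Sam--Snowden theorem quoted above, this forces $\mathcal{C}(numOp)^{op}$ not to be quasi-Gr\"obner. I will produce this module from an infinite antichain of morphisms out of a single object, and then promote the antichain to a strictly ascending chain of submodules of a representable module.

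For each $n \geq 1$ let $H_n$ be the arity-$2$ modular operation whose underlying graph has two vertices joined by $n$ parallel edges, with one external leg attached to each vertex (so of genus $n-1$), equipped with some fixed ordering data. The first step is to check that the family $\{H_n\}$ is an antichain in $\mathcal{C}(numOp)^{op}$. Any morphism $H_n \to H_m$ inserts non-unital modular operations $Q_1, Q_2$ of arity $n+1$ into the two vertices of $H_n$; the resulting graph has $|V(Q_1)| + |V(Q_2)|$ vertices and retains the $n$ original edges between the two sides. Each $Q_i$ is a connected graph with at least one vertex, and the requirement that the result be $H_m$ with exactly two vertices forces $|V(Q_i)| = 1$. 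Then each $Q_i$ is a single vertex with some self-loops and $n+1$ legs; since $H_m$ has no self-loops, both $Q_i$ must be plain corollas, and since the number of edges between the two sides is preserved we conclude $m = n$.

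Next, let $p$ be the arity-$2$ corolla and let $f_n : p \to H_n$ be the morphism in $\mathcal{C}(numOp)^{op}$ obtained by substituting $H_n$ into the unique vertex of $p$. Consider the representable module $M = k[\operatorname{Hom}(p, -)]$, which is finitely generated by $\mathrm{id}_p$, and let $M_i \subseteq M$ be the submodule generated by $f_1, \dots, f_i$. The antichain property implies that $f_{i+1}$ cannot be written as $h \circ f_j$ for any $j \leq i$ and any morphism $h$ of $\mathcal{C}(numOp)^{op}$, so $f_{i+1} \notin M_i(H_{i+1})$ and $M_1 \subsetneq M_2 \subsetneq \cdots$ is an infinite strictly ascending chain of submodules of $M$. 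Hence $M$ is not Noetherian and $\mathcal{C}(numOp)^{op}$ is not quasi-Gr\"obner.

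The delicate point will be making the antichain verification genuinely airtight: one has to be careful about the ordering of vertices, the orderings of half-edges at each vertex, and the symmetries built into the operad $mOp$, to confirm that none of these produce hidden morphisms between $H_n$ and $H_m$ for $n \neq m$. Once the vertex count of a substituted graph is correctly identified as $|V(Q_1)| + |V(Q_2)|$, however, the rest of the argument is routine combinatorial bookkeeping.
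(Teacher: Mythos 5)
Your proof is correct and takes essentially the same approach as the paper's: the paper likewise starts from the two-leaf corolla and the family of two-vertex graphs with $n$ parallel edges (its $p_i$ are your $H_{i+1}$), and extracts a non-finitely-generated submodule of the principal projective from the fact that no two of these graphs admit a morphism between them. You simply make explicit the antichain verification and the strictly ascending chain of submodules, which the paper leaves implicit.
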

\begin{proof}
The category $\mathcal{C}(numOp)^{op}$ is similar to the category $\mathcal{C}(numOp_{(g,n)})^{op}$ described above, except the objects are not endowed with the genus map and the morphisms are not required to respect the genus map.

Observe that there are morphisms  $f_i:id_1\to p_i$ from the corolla with two leaves, ordered trivially, to the graphs on two vertices, of genus $i$, without loops, and with the $0$-th leaf adjacent to the first vertex and the $1$-st leaf adjacent to the second vertex. Let $M$ be the module generated by $id_1$ and let $N$ be the maximal submodule of $M$ that is trivial on the graphs of genus $0$ and on the graphs with only one vertex. The module $N$ is non-trivial over objects $p_i$, and thus is not finitely generated.
\end{proof}

\begin{theorem}
\label{thm:mopgn-grobner}
The category $\mathcal{C}(numOp_{(g,n)})^{op}$ is quasi-Gr\"obner.
\end{theorem}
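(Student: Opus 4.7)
The plan is to mimic the strategy of Theorem~\ref{thm:pop-grobner} and Corollary~\ref{thm:sop-grobner}: reduce from modular operadic graphs to cyclic operadic trees decorated with extra data, and transfer quasi-Gröbnerness from the cyclic case already established. To this end, I would introduce an auxiliary category $\mathcal{F}$ whose objects are cyclic operadic trees $T$ equipped with a vertex-genus labelling $g:V(T)\to\mathbb{N}$ and a partial matching $M$ on the set of leaves of $T$, with morphisms given by $2$-level-tree substitutions that respect these decorations (the matching of the finer object restricts on each inserted subtree to a partial matching of its leaves, and the genus of each vertex of the coarser object equals the sum of vertex genera of the inserted subtree plus the first Betti number of the resulting subgraph). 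Gluing each matched pair of leaves into an internal edge yields a functor $\Phi:\mathcal{F}\to\mathcal{C}(numOp_{(g,n)})^{op}$.

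I would first verify that $\Phi$ is essentially surjective and satisfies property~(F): every connected operadic graph admits a spanning tree, and for a fixed modular graph $q$ there are only finitely many pairs (spanning tree, ordering of non-tree half-edge pairs), each of which extends uniquely along any morphism $q\to q'$ to a compatible spanning datum on the refined graph, giving the required finite set of factorisations. Then I would show that $\mathcal{F}$ is quasi-Gröbner by constructing a faithful functor $\Psi:\mathcal{F}\to\mathcal{D}'$ into the Gröbner category $\mathcal{D}'$ from the proof of Corollary~\ref{thm:sop-grobner} that forgets both the vertex genera and the leaf matching. Faithfulness holds because, once the source and target of a morphism in $\mathcal{F}$ are fixed, the genera on the inserted subtrees are forced by additivity and the matchings on their leaves are forced by the matching of the target; property~(S) for $\Psi$ reduces to the observation that a cyclic substitution pattern compatible with the decorated source and the decorated target automatically lifts to a morphism of $\mathcal{F}$. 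Lemma~\ref{lem:faithful-g1} together with Proposition~4.4.2 of \cite{sam2017grobner} then yield that $\mathcal{F}$ is Gröbner.

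The main obstacle I anticipate is the presence of arity-$0$ (equivalently degree-$1$) vertices in the operadic graphs, exactly as in Theorem~\ref{thm:pop-grobner}: a sequence of morphisms out of a single graph $p$ can introduce arbitrarily many degree-$1$ vertices in the inserted subgraphs, a phenomenon which the spanning-tree reduction does not control on its own. I would handle this by first proving the full subcategory version of the theorem for graphs without degree-$1$ vertices, where the argument above goes through essentially unchanged since the graph genus and the number of vertices then bound the total size of any inserted subgraph, and then extending to the whole category via the relative Kruskal tree theorem of \cite{barter2015noetherianity}, passing to subsequences in which the numbers of degree-$1$ and degree-$2$ vertices attached along each edge of the spanning tree are non-decreasing, exactly as in the subsequence argument sketched at the end of the proof of Theorem~\ref{thm:pop-grobner}.
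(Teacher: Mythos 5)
Your plan founders at the very first step, the verification of property~(F) for $\Phi:\mathcal{F}\to\mathcal{C}(numOp_{(g,n)})^{op}$. For a morphism $g:c_i\to c$ in your category $\mathcal{F}$ to exist, the spanning tree carried by $c$ must be \emph{block-compatible} with the insertion: its restriction to each inserted subtree must be a spanning tree of that block, and the contracted tree must be the spanning tree of $c_i$. An arbitrary spanning tree of the refined graph has no reason to satisfy this. Concretely, let $q$ have two genus-$0$ vertices $v_1,v_2$ joined by two parallel edges, and let $h:q\to q'$ insert a single edge $a{-}b$ into $v_1$ and a single edge $c{-}d$ into $v_2$, so that $q'$ is a $4$-cycle $a{-}b{-}c{-}d{-}a$ with $a{-}b$ and $c{-}d$ the internal edges of the blocks. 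Decorate $q'$ with the spanning tree $\{b{-}c,\,c{-}d,\,d{-}a\}$: the block $\{a,b\}$ is then disconnected in the spanning tree, so $h$ cannot be written as $\Phi(g)\circ f_i$ for any $f_i$ whose target has fewer vertices than $q'$, since every block of $\Phi(g)$ must be connected in the target's spanning tree. Replacing the inserted edges by paths of length $n$ produces, for every $n$, a $2n$-cycle with a spanning tree transverse to the block structure, so no finite family $f_1,\dots,f_k$ can work. (The correct factorizations would require $f_i$'s with unboundedly many vertices.) A second, independent problem is that the forgetful functor $\Psi:\mathcal{F}\to\mathcal{D}'$ is not defined on morphisms: if an inserted subgraph $q_l$ has positive first Betti number, its spanning tree has strictly more leaves than the vertex $v_l$ has half-edges, so the underlying ``tree substitution'' does not have matching arities and is not a morphism of $\mathcal{D}'$. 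These two defects are in tension: loosening the morphisms of $\mathcal{F}$ to repair (F) destroys the comparison with $\mathcal{D}'$, and vice versa.

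For comparison, the paper avoids spanning trees altogether. It first fixes canonical vertex orderings by depth-first search (a subcategory $\mathcal{Z}$ with no nontrivial endomorphisms whose cardinality functor lands in $\mathbf{OS}^{op}$), then passes to a category $\mathcal{G}$ of graphs with a bounded half-edge colouring; $\mathcal{G}\to\mathcal{Z}$ is a discrete fibration with finite nonempty fibres (hence satisfies (F)), and the colouring makes $\mathcal{G}\to\mathbf{OS}^{op}$ faithful, giving (G1). For (G2) it stabilizes the topological core of the inserted graphs (finitely many possibilities, by the genus bound) and reduces the residual degree-$1$ and degree-$2$ vertices to insertions of two-leaf planar trees, which is where Barter's relative Kruskal theorem enters --- roughly the role you assign to it, but applied after a reduction your construction does not supply. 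You would need to replace the spanning-tree gadget by something whose extra data transports \emph{uniquely and exhaustively} along refinements, which is exactly what the colouring achieves.
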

\begin{proof}
Denote $\mathcal{C}(numOp_{(g,n)})^{op}$ as $\mathcal{C}$. The proof proceeds in several steps, in the end giving a sequence $\mathcal{G}\to\mathcal{Z}\hookrightarrow\cC'\hookrightarrow\cC$ of categories such that $\mathcal{G}$ is quasi-Gr\"obner and the functors are sufficiently nice.

Notice that morphisms in $\mathcal{C}$ preserve the number of leaves of operadic graphs. Let $\mathcal{C}_0$, $\mathcal{C}_1$ and $\mathcal{C}'$ be the full subcategories of $\mathcal{C}$ on operadic graphs without leaves, with exactly one leaf, and with at least one leaf respectively. There is a functor $\mathcal{C}_1\to\mathcal{C}_0$ that removes the leaf. This functor is full and surjective on objects, and in particular satisfies property (F).  If $\mathcal{C}'$ is quasi-Gr\"obner, then $\mathcal{C}_1$ is quasi-Gr\"obner, and then $\mathcal{C}_0$ is quasi-Gr\"obner, which implies that $\mathcal{C}$, as the disjoint union of $\mathcal{C}'$ and $\mathcal{C}_0$, is quasi-Gr\"obner.

As in the previous proofs, to prove that $\mathcal{C}'$ is quasi-Gr\"obner we take a subcategory $\mathcal{Z}$ of $\mathcal{C}'$ such that $\mathcal{Z}$ does not have non-trivial endomorphisms and the image of $\mathcal{Z}$ under the functor $F:\mathcal{C}'\to\mathbf{FS}^{op}$ lies in $\mathbf{OS}^{op}$. The objects of the subcategory $\mathcal{Z}$ are operadic graphs with at least one leaf, with the first vertex being the vertex adjacent to the $0$-th leaf, and with the $0$-th half-edge of the first vertex being the $0$-th leaf of the graph, with vertices ordered in clockwise depth-first search order, and such that the edges that are traversed by the clockwise depth-first search that starts from the first vertex contain exactly one $0$-th half-edge and one non-zeroth half-edge of adjacent vertices. The morphisms $f:p\to q$ of $\mathcal{Z}$ are the morphisms of $\mathcal{C}'$ such that for all $l$ the order on the leaves of the operadic graph $q_l$ that is inserted by $f$ into the vertices of $p$ is the order in which the clockwise depth-first search over $q_l$ that starts from the $0$-th leaf traverses the leaves of $q_l$. This condition on the morphisms implies that the $0$-th leaf of $q_l$ is the $0$-th half-edge of its vertex and that the clockwise depth-first search over $q$ that starts from the first vertex traverses (the smallest vertices of) the subgraphs $q_l$ of $q$ sequentially from $q_1$ to $q_n$. The latter property implies that the image of $\mathcal{Z}$ under $F:\mathcal{C}'\to\mathbf{FS}^{op}$ lies in $\mathbf{OS}^{op}$. Endomorphisms in $\mathcal{C}'$ insert corollas into vertices, and the condition on the morphisms in $\mathcal{Z}$ implies that these corollas are identity operations, i.e.\@ endomorphisms in $\mathcal{Z}$ are trivial.

The inclusion $\mathcal{Z}\to\mathcal{C}'$ is essentially surjective: any object of $\mathcal{C}'$ can be obtained from an object of $\mathcal{Z}$ by insertion of corollas and by permutation of indices of vertices. Next we show that this inclusion satisfies property (F). For an object $d$ of $\mathcal{C}'$ the morphisms that ensure the property (F) will be all the isomorphisms from $d$ to objects of $\mathcal{Z}$. Notice that we can take some isomorphism $\xi:d\to d'$ with $d'$ in $\mathcal{Z}$, and then any isomorphism $d\to z$ with $z$ in $\mathcal{Z}$ is a composition of $\xi$  with some isomorphism $d'\to z$ in $\mathcal{C}'$. This shows that in the proof of the property (F) we can assume that $d$ itself is in $\mathcal{Z}$. 

Observe the following. Let $p$ be an object of $\mathcal{Z}$, and let $i:p\to r$ be an isomorphism in $\mathcal{C}'$ that for some $j$ substitutes into the $j$-th vertex of $p$ a corolla whose $0$-th leaf is the $0$-th half-edge of its vertex, substitutes the identity operations into the remaining vertices of $p$, and does not permute the indices of vertices. Then there is exactly one isomorphism $i':r\to p'$ in $\mathcal{C}'$ with $p'$ in $\mathcal{Z}$ that substitutes the identity operations into the first $j$ vertices of $r$, for all $l>j$ substitutes corollas with cyclic permutation on leaves into the remaining vertices of $r$, and does not permute the indices of the first $j$ vertices of $r$.

Let $f:p\to q$ be a morphism in $\mathcal{C}'$ between objects in $\mathcal{Z}$. Let $q_l$ be the operadic graph that is inserted into the $l$-th vertex of $p$ by $f$. Since the $0$-th leaf of $q$ is the $0$-th half-edge of its vertex, the $0$-th leaf of $q_1$ is the $0$-th half-edge of its vertex. The operation $q_1$ can be represented as the composition $x_1\circ_1 q'_1$, where $x_1$ is a corolla with the $0$-th leaf being the $0$-th half-edge of its vertex, and with $q'_1$ such that the order on the leaves is the order in which the clockwise depth-first search traverses the leaves of $q'_1$. The substitution of the operation $x_1$ into the first vertex of $p$ gives an isomorphism from $p$ in $\mathcal{C}'$. As explained in the previous paragraph, this isomorphism can be extended to isomorphism $i_1:p\to p_1$ with $p_1$ in $\mathcal{Z}$. This gives decomposition of $f$ as $f_1\circ i_1$. If $p$ has $n$ vertices, doing the same for $f_{i-1}$ and the $i$-th vertex of $p_{i-1}$, with $i$ ranging from $2$ to $n$, gives the isomorphism $i=i_{n-1}\circ\dots\circ i_1:p\to p_n$ such that $f=f_n\circ i$. The morphism $f_n$ substitutes operations $q'_l$ into vertices of $p_n$, and the orders on leaves of $q'_l$ are such that $f_n$ is in $\mathcal{Z}$. This shows that the inclusion $\mathcal{Z}\to \mathcal{C}'$ satisfies property (F). 

It remains to prove that $\mathcal{Z}$ is quasi-Gr\"obner. The image of the functor $\mathcal{Z}\to\mathbf{FS}^{op}$ lies in $\mathbf{OS}^{op}$, however this functor is not faithful, e.g.\@ there are two different morphisms from a graph with one vertex of degree $1$ and one loop to a graph with one vertex of degree $0$ with two loops, and these morphisms are mapped to the same map in $\mathbf{FS}^{op}$. We will construct an essentially surjective functor $\mathcal{G}\to\mathcal{Z}$ that satisfies property (F) and is such that the composition $\mathcal{G}\to\mathcal{Z}\to\mathbf{OS}^{op}$ is faithful, which implies that $\mathcal{G}$ satisfies property (G1). To prove that $\mathcal{Z}$ is quasi-Gr\"obner it then suffices to prove that $\mathcal{G}$ satisfies property (G2). 

An object of $\mathcal{G}$ is an object $p$ of $\mathcal{Z}$ additionally endowed with a colour map $col:H\to\mathbb{N}$ from the half-edges of $p$ that satisfies the following: 
\begin{enumerate}
    \item For any edge of $p$ its two  half-edges have the same colour, which will be called the colour of the edge.
    \item Let $p'$ be the operadic graph obtained from $p$ by repeated removal of all the vertices of degree $1$ and genus $0$ together with the adjacent edges (i.e.\@ we remove both of the half-edges of the only edge adjacent to the corresponding vertex), until no vertex of arity $0$ and genus $0$ remains, except possibly the last vertex. Then the removed edges have colour $0$ and the remaining edges have colour different from $0$.
    \item If a path in $p'$ from a vertex $v$ to a vertex $w$ (here the path includes the vertices $v$ and $w$) consists only of vertices of degree $2$ and genus $0$, then the edges in this path have the same colour. Let $p''$ be the graph obtained from $p'$ by replacing each such maximal path with an edge that has the same colour as the edges in the path that it replaces. Then any two half-edges of $p''$ that do not belong to the same edge have different colours.
    \item \label{item-condition-on-colours} For all $h$ in $H$ we have $col(h)\leq 9(\sum_{v\in p} g(v) + g(p) + l(p))$, where $g(v)$ is the genus of a vertex $v$, $g(p)$ is the genus of the graph of $p$, and $l(p)$ is the number of leaves of $p$.
\end{enumerate}
The morphisms are required to preserve colours, i.e.\@ for a morphism $f:p\to q$ that inserts $q_l$ into the $l$-th vertex of $p$ the leaves of $q_l$ (seen as the half-edges of $q$) have the same colours as the half-edges of $p$ to which these leaves correspond under $f$. 

If there is a morphism $f:p\to q$ in $\mathcal{Z}$, then the value of the right hand side of the inequality in the condition~(\ref{item-condition-on-colours}) is the same for $p$ and $q$. This implies that the forgetful functor $\mathcal{G}\to\mathcal{Z}$ is a discrete fibration, i.e.\@ that for any morphism $f:p\to q$ and colouring of $q$ there is exactly one colouring of $p$ compatible with $f$. The functor $\mathcal{G}\to\mathcal{Z}$ is surjective on objects. Indeed, the number of colours needed to colour an object $p$ in $\mathcal{Z}$ is the same as that of the graphs $p'$ and $p''$ from the above conditions, and is less or equal to $3(e + l)$, where $e$ and $l$ are the number of edges and of half-edges in the topological realization of $p'$ (or equally of $p''$). If $p'$ is the graph with one vertex of genus $0$ and degree $1$, then the condition~(\ref{item-condition-on-colours}) trivially allows to colour $p'$. Otherwise there is a morphism $f:p'\to q'$ in $\mathcal{Z}$ such that all vertices in $q'$ have genus $0$ and degree either $2$ or $3$. Add a vertex with a loop to each leaf of $q'$ to get a graph $r$ with 3-valent topological realization. The total genus of $r$ is $g_r=(\sum_{v\in p} g(v) + g(p) + l(p))$, and the number of edges in the topological realization of $r$ is $3g_r-3$. A colouring of $r$ gives a colouring of its subgraph $q'$, and in turn gives a colouring of $p'$ and of $p$.

The functor $\mathcal{G}\to\mathcal{Z}$ is a discrete fibration with finite non-empty fibers. Any discrete fibration $\mathcal{A}\to\mathcal{B}$ with finite non-empty fibers satisfies property (F): for an object $b$ the objects and the morphisms that ensure the property (F) for $b$ are all the objects in $\mathcal{A}$ over $b$ together with the morphism $id_b$.

The composition $\mathcal{G}\to\mathcal{Z}\to\mathbf{OS}^{op}$ is faithful: for a morphism $f:p\to q$ in $\mathcal{G}$ the subgraphs $q_l$ of $q$ that are inserted by $f$ into $p$ can be recovered as follows. Their vertices are given by the image of $f$ in $\mathbf{OS}^{op}$, and their edges of non-zero colour are the edges that have the colours of $q$ that are not the colours of $p$. By Lemma~\ref{lem:faithful-g1} the faithfulness of $\mathcal{G}\to\mathbf{OS}^{op}$ implies that $\mathcal{G}$ satisfies property (G1). 

It remains to prove property (G2) for $\mathcal{G}$. Let $f_i$ be a sequence of morphisms from $p$ in $\mathcal{G}$, with $f_i$ inserting $q_{li}$ into the $l$-th vertex of $p$. Let $q'_{li}$ be the coloured operadic graphs obtained from the graphs $q_{li}$ by repeated removal of all the vertices of degree $1$ and genus $0$ until no such vertices remain (except possibly the last vertex), and by further replacing the maximal paths whose vertices have degree $2$ with edges, and let $f'_i$ be the corresponding morphisms from $p$. Since the genus of the $l$-th vertex of $p$ is equal to $(\sum_{v\in q_{li}}g(v)+g(q_{li}))$ and the number of half-edges of the $l$-th vertex of $p$ is equal to the number of leaves of $q_{li}$, the total number of possible coloured operadic graphs $q'_{li}$ is finite, and  there is a subsequence $f'_j$ of the sequence $f'_i$ that consists of the same morphism $f:p\to q'$ repeated infinitely often. Let $f_j$ be the corresponding subsequence of $f_i$. Observe that there are morphisms $g_j$ such that $f_j=g_j\circ f$, where $g_j$ inserts planar operadic trees with two leaves into the vertices of $q'$. If $g_{j_1}\leq g_{j_2}$ then $f_{j_1}\leq f_{j_2}$. Let $r$ be the object of the subcategory $\mathcal{D}'$ of $\mathcal{C}(nusOp)^{op}$ (described in Corollary~\ref{thm:sop-grobner}) that is obtained from $q'$ by cutting all the edges that are not traversed via the depth-first search of $q'$, i.e.\@ by replacing these edges with two corresponding leaves. To morphisms $g_j$ correspond to morphisms $g'_j$ from $r$ in $\mathcal{D}$ such that $g_j$ and $g'_j$ insert the same planar trees in their vertices. Since $\mathcal{D}$ satisfies property (G2), there are some $j_1< j_2$ such that $g'_{j_1}\leq g'_{j_2}$, which implies that $g_{j_1}\leq g_{j_2}$, and $f_{j_1}\leq f_{j_2}$.
\end{proof}

\section{The general constructions and cobordisms}
\label{sec:general-tw}

The categories $\mathcal{C}(P)$ are closely related to more general constructions, described respectively in \cite{ginzburg1994koszul} and \cite{burkin2022twisted}. We recall them briefly and show that these constructions allow to find new quasi-Gr\"obner categories. 

In the following definitions, given an algebra $A$ over an operad $Q$, the expression $q(a_i,\dots,a_{i+m-1})$ denotes the element of $A$ obtained by the application of an operation $q$ to elements $a_i,\dots,a_{i+m-1}$ of $A$.

\begin{definition}
Let $Q$ be a $C$-coloured operad and $A$ be a $Q$-algebra. The universal enveloping category $\mathcal{U}_Q(A)$ of $A$ has the colours of $Q$ as its object. A morphism in $\mathcal{U}_Q(A)$ is an equivalence class of expressions $p(c_1,a_2,\dots,a_n)$ where $p\in Q(c_1,\dots,c_n;c_0)$ for some $c_i$ in $C$ and $a_i$ in $A(c_i)$. The equivalence is generated by relations  $(p\circ_i q)(c_1,a_2,\dots,a_{n+m-1})\sim p(c_1,a_2,\dots,a_{i-1},q(a_i,\dots),a_{i+m},\dots)$, where $2\leq i\leq n$ and $q$ is composable with $p$, and by relations $p^\sigma(c_1,a_{\sigma(2)},\dots,a_{\sigma(n)})\sim p(c_1,a_2,\dots,a_n)$ for all permutations $\sigma\in \mathbb{S}_n$ that preserve $1$. The source of a morphism $p(c_1,a_2,\dots,a_n)$ is $c_1$, and the target is $c_0$, where $p$ is in $Q(c_1,\dots,c_n;c_0)$. 
\end{definition}

\begin{definition}
Let $Q$ be a $C$-coloured operad and $A$ be a $Q$-algebra. The twisted arrow category $\Tw_Q(A)$ of $A$  has the elements of $A$ as its objects. A morphism in $\Tw_Q(A)$ is an equivalence class of expressions $p(a_1,a_2,\dots,a_n)$, where $p\in Q(c_1,\dots,c_n;c_0)$ for some $c_i$ in $C$ and $a_i$ in $A(c_i)$. The equivalence is generated by relations $(p\circ_i q)(a_1,a_2,\dots,a_{n+m-1})\sim p(a_1,a_2,\dots,a_{i-1},q(a_i,\dots),a_{i+m},\dots)$, where $2\leq i\leq n$ and $q$ is composable with $p$, and by relations $p^\sigma(a_1,a_{\sigma(2)},\dots,a_{\sigma(n)})\sim p(a_1,a_2,\dots,a_n)$ for all permutations $\sigma\in \mathbb{S}_n$ that preserve $1$. The source of a morphism $p(a_1,a_2,\dots,a_n)$ is $a_1$, and the target is $p(a_1,a_2,\dots,a_n)$ seen as an element of $A(c_0)$. 
\end{definition}

In both cases the composition of morphisms is computed via the operadic substitution $\circ_1$. The functor $G:\Tw_Q(A)\to\mathcal{U}_Q(A)$ that sends an element of $A$ to the corresponding colour of $Q$ is a discrete opfibration. Notice that $\mathcal{U}_Q(A)$-modules are the same as $A$-modules. If $A$ is the terminal $Q$-algebra then $\Tw_Q(A)=\mathcal{U}_Q(A)$. 

\subsection*{Cobordisms} An example of the construction $\Tw$ is the category $\mathbf{CS}$ whose objects are connected orientable surfaces with indexed boundaries and morphisms are cobordisms between boundaries. The subcategory $\mathbf{ncCS}$ of $\mathbf{CS}$ may serve instead of $\mathbf{FS}^{op}$ as the indexing category of modules related to modular operads, e.g.\@ of the module $H_i(\overline{M}_{g,n})$ considered in \cite{tosteson2019representation}. 

\begin{proposition}
Let $mOp$ be the operad whose algebras are modular operads, and $uCom_m$ be the modular envelope of the terminal cyclic operad. Let $Cob$ be the category with finite sequences of circles as objects and orientable cobordisms between disjoint unions of the indexed circles as morphisms. The category $\mathcal{U}_{mOp}(uCom_m)$ is isomorphic to the subcategory $Cob'$ of $Cob$ that contains all the morphisms of $Cob$ except the non-trivial morphisms from the empty sequence $\emptyset$. The category $\Tw_{mOp}(uCom_m)$ is isomorphic to the category $\mathbf{CS}$, the full subcategory of the slice category $\emptyset/Cob$ on non-empty connected cobordisms from the empty set. The projection functor $G:\mathbf{CS}\to Cob'$ is a discrete opfibration surjective on objects.
\end{proposition}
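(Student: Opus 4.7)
The plan is to construct explicit functors $\Phi:\mathcal{U}_{mOp}(uCom_m)\to Cob'$ and $\Psi:\Tw_{mOp}(uCom_m)\to\mathbf{CS}$ via geometric realization, verify they are bijective on objects and morphism classes, and then read off the opfibration property. I begin by recalling the geometric content of $uCom_m$: since the terminal cyclic operad has a single operation of each arity (a corolla) and the modular envelope freely adjoins all self-gluings, an element of $uCom_m(n)$ is an isomorphism class of connected orientable surfaces with $n$ labelled boundary components, with all genera admitted. Under this identification the colours $c\in\mathbb{N}$ of $mOp$ correspond to finite sequences of $c$ circles (objects of $Cob'$), and elements of $uCom_m$ correspond to non-empty connected cobordisms from $\emptyset$ (objects of $\mathbf{CS}$).

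On morphisms, I define $\Phi$ on a representative $p(c_1,a_2,\dots,a_n)$ by interpreting the connected graph $p\in mOp(c_1,\dots,c_n;c_0)$ as a cobordism: fill vertices $2,\dots,n$ with the surfaces $a_2,\dots,a_n$, replace each internal edge of $p$ by a tube connecting the corresponding boundary components, and leave vertex $1$ as an open slot whose $c_1$ half-edges constitute the source circles, while the $c_0$ external legs of $p$ constitute the target circles. When $c_1=0$, connectedness of $p$ forces $p$ to consist of a single isolated arity-zero vertex with $c_0=0$, so the only morphism from $\emptyset$ is $id_\emptyset$, matching the defining restriction of $Cob'$. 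The construction of $\Psi$ will be identical except that vertex $1$ is also filled by $a_1$; the target of the resulting morphism is then the connected surface obtained by gluing $a_1$ onto the source boundary of $\Phi(p(c_1,a_2,\dots,a_n))$, which is precisely $p(a_1,\dots,a_n)\in uCom_m(c_0)$ by the modular operad action.

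Well-definedness on equivalence classes and functoriality follow by direct comparison of relations: the operadic associativity relation $(p\circ_i q)(c_1,\dots)\sim p(c_1,\dots,q(\dots),\dots)$ for $i\geq 2$ corresponds topologically to identifying two ways of cutting the same cobordism (once by first carving out the subgraph $q$, once by substituting the combined graph in one step), and the equivariance relation for permutations fixing position $1$ corresponds to relabelling the internal pieces; composition via $\circ_1$ corresponds to stacking cobordisms along their common source circles. Bijectivity on morphism classes will follow from a pants-type decomposition argument: any orientable cobordism can be cut along a system of disjoint circles into connected pieces, yielding a representative of the form $p(c_1,a_2,\dots,a_n)$, and two such decompositions of the same cobordism are related by a finite sequence of moves (sliding a cut across a vertex, permuting pieces) that are exactly the operadic equivalences.

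Finally, the projection $G:\mathbf{CS}\to Cob'$ is a discrete opfibration surjective on objects. Surjectivity is immediate: a disk with $n-1$ holes, or a sphere when $n=0$, exhibits a non-empty connected cobordism $\emptyset\to[n]$ for every object $[n]$ of $Cob'$. For the opfibration property, given $(S,f)$ in $\mathbf{CS}$ and $h:\partial S\to Y$ in $Cob'$, the unique candidate lift is the morphism $h:(S,f)\to(Y,h\circ f)$ in the slice $\emptyset/Cob$; its target lies in $\mathbf{CS}$ because, under the isomorphism $\mathcal{U}_{mOp}(uCom_m)\cong Cob'$ already established, $h$ corresponds to a connected graph $p$, so every component of $h$ is attached to a circle of $\partial S$ and the composite $h\circ f$ remains connected. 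The key difficulty will be the faithfulness half of the bijection on morphism classes for $\Phi$ and $\Psi$: one must show that the operadic associativity and equivariance relations exhaust all topological identifications between decompositions of the same cobordism. This is essentially the universal property of $uCom_m$ as the modular envelope of the terminal cyclic operad---every geometric gluing of connected surfaces is encoded by some $mOp$-operation on $uCom_m$, and two such encodings of the same gluing differ only by the defining relations of a modular operad, which are exactly those built into the constructions of $\mathcal{U}$ and $\Tw$.
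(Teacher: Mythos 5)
The paper does not actually prove this proposition: it defers entirely to \cite[Section~1.1 and Proposition~2.33]{burkin2022twisted}. So any self-contained argument is necessarily a different route, and your geometric outline (realize an $mOp$-operation as a gluing pattern of surfaces, leave slot $1$ open for $\mathcal{U}$, fill it for $\Tw$) is indeed the natural one and matches what the cited reference does. That said, as a proof your proposal has two genuine gaps.

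First, the step you yourself flag --- that the relations defining $\mathcal{U}_{mOp}(uCom_m)$ and $\Tw_{mOp}(uCom_m)$ exhaust all identifications between decompositions of the same cobordism --- is the entire mathematical content of the statement, and invoking ``essentially the universal property of $uCom_m$'' is circular: the universal property tells you $uCom_m$ receives a map from/to any modular-operad-shaped gadget, not that its operations are classified by genus. What is actually needed is a computation of the modular envelope of the terminal cyclic operad (an Euler-characteristic bookkeeping showing that a connected gluing of corollas is determined up to the graph relations by its first Betti number and number of legs), and then a separate verification that two $2$-level (resp.\ $3$-level) presentations of the same cobordism are connected by the specific relations $(p\circ_i q)(\dots)\sim p(\dots,q(\dots),\dots)$, $i\geq 2$, and the $\sigma$-relations fixing slot $1$ --- note these do \emph{not} include $\circ_1$-associativity, which is why the source slot must be treated asymmetrically throughout. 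None of this is done.

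Second, there is an internal inconsistency around connectedness. Operations of $mOp$ are \emph{connected} graphs (as your own analysis of the $c_1=0$ case assumes), so every component of the cobordism $\Phi(p(c_1,a_2,\dots,a_n))$ meets the source boundary: any vertex of $p$ is joined to vertex $1$ by a path, and the last edge of that path produces a source circle in the corresponding component. Hence $\Phi$ cannot hit a morphism such as an annulus disjoint union a disk (from one circle to two circles), which \emph{is} in $Cob'$ under the literal reading of ``all morphisms except the non-trivial ones from $\emptyset$''. Your pants-decomposition surjectivity argument would assign such a cobordism a disconnected graph, which is not an operation of $mOp$. The statement only holds if $Cob'$ is read component-wise (every component of the cobordism has non-empty source boundary) --- which is also exactly what your own lifting argument for the discrete opfibration secretly uses, since otherwise $h\circ f$ fails to be connected and the lift does not exist. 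You should make this reading explicit; as written, your surjectivity claim and your opfibration claim contradict each other.
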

\begin{proof}
For the definition of the operad $mOp$ and for the proof see Section~1.1 and Proposition~2.33 in \cite{burkin2022twisted}. 
\end{proof}

\begin{proposition}
The category $\mathbf{CS}$, and thus the category $Cob'$, are not quasi-Gr\"obner.
\end{proposition}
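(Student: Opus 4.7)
The plan is to exhibit a finitely generated $\mathbf{CS}$-module that fails to be Noetherian; by the theorem of Sam--Snowden recalled above, this rules out the quasi-Gr\"obner property (taking $k$ to be any non-zero left-Noetherian ring). Once $\mathbf{CS}$ is known not to be quasi-Gr\"obner, the corresponding statement for $Cob'$ follows from Lemma~\ref{lem:opfibration-quasi-grobner} applied to the discrete opfibration $G:\mathbf{CS}\to Cob'$: if $Cob'$ were quasi-Gr\"obner, then $\mathbf{CS}$ would be as well.

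The decisive structural feature of $\mathbf{CS}$ to exploit is that every closed connected orientable surface $\Sigma_g$ of genus $g$ (viewed as a non-empty connected cobordism $\emptyset\to\emptyset$) is a \emph{sink}: unwinding the definition of the slice category $\emptyset/Cob$, a morphism $\Sigma_g\to Y$ in $\mathbf{CS}$ is a cobordism $h:\emptyset\to\partial Y$ with $h\sqcup\Sigma_g\cong Y$, and connectedness of $Y$ forces $h=\emptyset$, whence $Y=\Sigma_g$ and the morphism is the identity. On the other hand, for every $g\geq 0$ there is at least one morphism in $\mathbf{CS}$ from the disk $D=\Sigma_0^1$ to $\Sigma_g$, namely the cobordism given by a genus-$g$ surface with one boundary circle, which recovers $\Sigma_g$ upon gluing with $D$.

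Now set $M=k[\mathrm{Hom}_{\mathbf{CS}}(D,-)]$; this is the free $\mathbf{CS}$-module on $\mathrm{id}_D$ and is therefore finitely generated. Define the subfunctor $N\subset M$ by $N(Y)=M(Y)$ when $Y$ is a closed surface and $N(Y)=0$ otherwise. This $N$ is a submodule: off closed surfaces the condition is vacuous, and on closed $Y$ the only outgoing morphism is the identity, which visibly preserves $N$. However, $N$ is not finitely generated. Indeed any generator must lie in $N(\Sigma_{g_0})$ for some single $g_0$, and the sink property of $\Sigma_{g_0}$ means that the submodule it generates is supported only at $\Sigma_{g_0}$; since $N(\Sigma_g)$ is non-zero for every $g\geq 0$ by the preceding paragraph, no finite collection of generators suffices. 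Hence $M$ is finitely generated but not Noetherian, so $\mathbf{CS}$-modules are not locally Noetherian and $\mathbf{CS}$ is not quasi-Gr\"obner; the claim for $Cob'$ follows from Lemma~\ref{lem:opfibration-quasi-grobner} as explained above.

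I do not anticipate a substantive obstacle in this plan. The only step that requires a moment of care is the verification that closed surfaces are sinks of $\mathbf{CS}$; this is immediate once one expands the definition of morphisms in $\emptyset/Cob$ and combines it with the restriction of $\mathbf{CS}$ to connected objects.
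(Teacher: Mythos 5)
Your proof is correct and follows essentially the same route as the paper: both exhibit a finitely generated module generated at the disk whose submodule supported on the closed surfaces $\Sigma_g$ fails to be finitely generated, using that closed surfaces admit no non-identity outgoing morphisms in $\mathbf{CS}$ while receiving a morphism from the disk for every genus. The only cosmetic difference is that the paper takes the constant module $\mathbb{Q}$ where you take the representable module $k[\mathrm{Hom}_{\mathbf{CS}}(D,-)]$; both work for the same reason.
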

\begin{proof}
Let $M$ be the module over $\mathbf{CS}$ equal to $\mathbb{Q}$ on all objects, with maps $\mathbb{Q}\to\mathbb{Q}$ being the identity maps. This module is finitely generated (by the hemisphere). The submodule $N$ of $M$ that is equal to $0$ on the surfaces with non-trivial boundary and to $\mathbb{Q}$ on the closed surfaces is not finitely generated.
\end{proof}

The proof shows that a quasi-Gr\"obner subcategory of $\mathbf{CS}$ cannot encode the module structure maps $M_{g,n}\to M_{g+k,n-2}$ for all $k$.

\begin{definition}
The categories $ncCob'$ and $\mathbf{ncCS}$ are the wide subcategories of $Cob'$ and $\mathbf{CS}$ on all morphisms such that the target boundary of each connected component of the corresponding cobordism is non-trivial. 
\end{definition}

To prove that the categories $ncCob'$ and $\mathbf{ncCS}$ are quasi-Gr\"obner we use the following category.

\begin{definition}
The category $\mathbf{gOS}$ of graded ordered surjections has the sets $\underline{n}=\{1,\dots,n\}$ as objects for all $n\geq 0$. A morphism $f:\underline{n}\to\underline{m}$ in $\mathbf{gOS}$ is a surjective map $f:\underline{n}\to\underline{m}$ such that $\min f^{-1}(i)<\min f^{-1}(j)$ for all $i<j$ in $\underline{m}$, together with a map $g_f:\underline{m}\to\mathbb{N}$ called grading. Composition $h\circ f$ of $f:\underline{n}\to\underline{m}$ and $h:\underline{m}\to\underline{k}$ is given by the composition of the corresponding set-maps and by the grading $g_{h\circ f}(i)=g_h(i)+\sum_{j\in h^{-1}(i)} g_f(j)$.
\end{definition}

\begin{definition}
    The functor $\Phi:\mathbf{gOS}^{op}\to ncCob'$ sends $\underline{n}$ to the sequence of $n$ circles and sends the opposite of $f:\underline{n}\to\underline{m}$ to the cobordism that for all $i$ connects the $i$-th circle with the circles in $f^{-1}(i)$ by the surface of genus $g_f(i)$.
\end{definition}

Notice that the functor $\Phi$ is faithful and surjective on objects, thus we can consider $\mathbf{gOS}^{op}$ as a wide subcategory of $ncCob'$.

\begin{proposition}
The category $\mathbf{gOS}^{op}$ is Gr\"obner.
\end{proposition}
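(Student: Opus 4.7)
The plan is to verify properties (G1) and (G2) for $\mathbf{gOS}^{op}$ by reducing to the fact (Sam--Snowden) that $\mathbf{OS}^{op}$ is Gröbner. First, note the explicit description: a morphism from $\underline{n}$ to $\underline{m}$ in $\mathbf{gOS}^{op}$ is a pair $(f,g)$ with $f:\underline{m}\to\underline{n}$ an ordered surjection and $g:\underline{n}\to\mathbb{N}$ a grading. The forgetful functor $\mathbf{gOS}^{op}\to\mathbf{OS}^{op}$, $(f,g)\mapsto f$, is not faithful, but serves as a comparison map.

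For (G1), I would combine an admissible order $\prec^{os}$ on morphisms from $\underline{n}$ in $\mathbf{OS}^{op}$ with the lexicographic well-order $<_{\mathrm{lex}}$ on $\mathbb{N}^n$ (a well-order by induction on $n$), and declare $(f_1,g_1)\prec(f_2,g_2)$ iff $f_1\prec^{os} f_2$, or $f_1=f_2$ and $g_1<_{\mathrm{lex}}g_2$. Admissibility is verified by unwinding composition in $\mathbf{gOS}$: post-composing by $(h,g_h):\underline{m}\to\underline{k}$ sends $(f_i,g_i)$ to $(f_i\circ h, g'_i)$ with $g'_i(j)=g_i(j)+\sum_{l\in f_i^{-1}(j)}g_h(l)$. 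When $f_1\prec^{os} f_2$, admissibility of $\prec^{os}$ gives $f_1\circ h\prec^{os} f_2\circ h$; when $f_1=f_2$, one has $g'_2(j)-g'_1(j)=g_2(j)-g_1(j)$ coordinatewise, so the lex comparison is preserved.

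The core step for (G2) is to show that the preorder on the set of morphisms from $\underline{n}$ in $\mathbf{gOS}^{op}$ decomposes as a product: $(f_1,g_1)\leq(f_2,g_2)$ if and only if $f_1\leq f_2$ (in the preorder on morphisms from $\underline{n}$ in $\mathbf{OS}^{op}$) and $g_1\leq g_2$ pointwise. The nontrivial implication is the main calculational point — given $f_2=f_1\circ h$ in $\mathbf{OS}$ and $g_2\geq g_1$ pointwise, distribute each nonnegative difference $g_2(i)-g_1(i)$ onto a single element of the nonempty fiber $f_1^{-1}(i)$ to manufacture a valid $g_h$. Consequently $|\underline{n}/\mathbf{gOS}^{op}|$ is isomorphic as a poset to the product $|\underline{n}/\mathbf{OS}^{op}|\times\mathbb{N}^n$.

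Finally, I would invoke that a product of two Noetherian posets is Noetherian: since every Noetherian poset (in the sense of this paper) is a well partial order, from any sequence one extracts an infinite weakly increasing subsequence in the first coordinate, then applies Dickson's lemma to the gradings along that subsequence to find a pair $i<j$ comparable in both coordinates. The first factor is Noetherian because $\mathbf{OS}^{op}$ is Gröbner, and the second by Dickson's lemma directly. The main obstacle is the preorder decomposition with its coordinatewise description; the remaining steps are routine packaging of standard results.
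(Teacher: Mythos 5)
Your proof is correct and follows essentially the same route as the paper: (G1) via the lexicographic refinement of an admissible order on $\mathbf{OS}^{op}$ by the grading, and (G2) via the observation that the preorder on morphisms from $\underline{n}$ is the product of the $\mathbf{OS}^{op}$-preorder with the pointwise order on $\mathbb{N}^n$, the paper phrasing this as successive subsequence extractions (first on the underlying surjection, then coordinate by coordinate on the grading) rather than invoking Dickson's lemma. You additionally make explicit the step the paper leaves implicit, namely manufacturing the grading $g_h$ that witnesses $(f_1,g_1)\leq(f_2,g_2)$ from $f_1\leq f_2$ and $g_1\leq g_2$ pointwise.
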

\begin{proof}
Denote by $\mathbf{OS}$ the category of ordered surjections from \cite{sam2017grobner}. Recall that the category $\mathbf{OS}^{op}$ is Gr\"obner. Let $U:\mathbf{gOS}^{op}\to\mathbf{OS}^{op}$ be the forgetful functor and denote by $\prec$ be the admissible orders on $\mathbf{OS}^{op}$. Define the orders $\prec'$ on the morphisms of $\mathbf{gOS}^{op}$ from the same targets so that $f\prec h$ if $U(f)\prec U(h)$ or if $U(f)=U(h)$ and for some $i$ we have $g_f(j)=g_h(j)$ for all $j<i$, and $g_f(i)<g_h(i)$. This is an admissible order, and property (G1) holds. 

To show that property (G2) holds take a sequence of morphisms $f_1,f_2,\dots$ in $\mathbf{gOS}^{op}$ with the same source. Since (G2) holds for $\mathbf{OS}^{op}$ we can take an infinite subsequence $f_{i_1},f_{i_2},\dots$ of this sequence so that $U(f_{i_1}),U(f_{i_2}),\dots$ is non-decreasing. Then we can further take subsequences of $f_{i_1},f_{i_2},\dots$ so that the value of the grading on the first element of the source is non-decreasing, then the value of the grading on the second element is non-decreasing, and so on until the value on the last element of the source is non-decreasing. This gives an infinite non-decreasing subsequence of $f_1,f_2,\dots$ in the order $\leq$, i.e.\@ property (G2) holds.
\end{proof}

\begin{proposition}
The categories $\mathbf{ncCS}$ and $ncCob'$ are quasi-Gr\"obner.
\end{proposition}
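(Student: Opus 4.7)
The plan is to first prove that $ncCob'$ is quasi-Gr\"obner, by showing that the functor $\Phi\colon\mathbf{gOS}^{op}\to ncCob'$ from the definition above is essentially surjective and satisfies property (F); combined with the Gr\"obner property of $\mathbf{gOS}^{op}$ just established, this gives what we want. Then I will apply Lemma~\ref{lem:opfibration-quasi-grobner} to the restriction $G\colon\mathbf{ncCS}\to ncCob'$ of the discrete opfibration $G\colon\mathbf{CS}\to Cob'$ of the earlier proposition to deduce the same for $\mathbf{ncCS}$. Essential surjectivity of $\Phi$ is immediate since $\Phi(\underline{n})$ is by definition the sequence of $n$ circles.

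For property (F), fix a source $\underline{m}$ in $ncCob'$ and consider an arbitrary morphism $W\colon m\text{-circles}\to n\text{-circles}$ in $ncCob'$. The key structural fact is that in $ncCob'$ every connected component of $W$ has a non-empty input boundary (since $Cob'$ excludes non-trivial morphisms from $\emptyset$, which is what makes $G\colon\mathbf{CS}\to Cob'$ a discrete opfibration in the earlier proposition) and a non-empty output boundary (the $nc$-condition). Consequently $W$ has at most $m$ components; its inputs form a partition of $\{1,\dots,m\}$ into non-empty parts $I_1,\dots,I_k$ and its outputs form a partition of $\{1,\dots,n\}$ into non-empty parts $O_1,\dots,O_k$, which I relabel so that $\min O_1<\cdots<\min O_k$. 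As representatives $f_i$ I take the finitely many cobordisms $m\text{-circles}\to k\text{-circles}$ indexed by labeled partitions $(I_1,\dots,I_k)$ of $\{1,\dots,m\}$ into non-empty parts, where $f_i$ is the disjoint union of $k$ genus-$0$ surfaces, the $l$-th having inputs $I_l$ and a single output (the $l$-th intermediate circle). Given $W$, the labeled input partition (with labels induced from the ordering on the $O_l$) selects a specific $f_i$, and the complementary factor is $\Phi(g)$ where $g\in\mathbf{gOS}^{op}$ corresponds to the ordered surjection $\underline{n}\to\underline{k}$ with fibers $O_l$ and grading $l\mapsto g_l$, with $g_l$ the genus of the $l$-th component of $W$. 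A direct comparison of components, inputs, outputs, and genera then shows $W=\Phi(g)\circ f_i$.

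The restriction $G\colon\mathbf{ncCS}\to ncCob'$ is surjective on objects because every sequence of circles bounds some connected surface, and it inherits being a discrete opfibration from $G\colon\mathbf{CS}\to Cob'$ since the $nc$-condition depends only on the underlying cobordism and is preserved by lifts; so the lift of an $ncCob'$-morphism in $\mathbf{CS}$ automatically lies in $\mathbf{ncCS}$. Lemma~\ref{lem:opfibration-quasi-grobner} then yields that $\mathbf{ncCS}$ is quasi-Gr\"obner. The main obstacle is property (F) for $\Phi$, specifically the finiteness of the family $\{f_i\}$; this finiteness rests on the bound $k\leq m$ on the number of components, which in turn rests on the absence of empty-input components in $ncCob'$-morphisms enforced by the $Cob'$ condition.
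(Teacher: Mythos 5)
Your proof is correct and follows essentially the same route as the paper: you establish property (F) for $\Phi:\mathbf{gOS}^{op}\to ncCob'$ using exactly the family of genus-zero cobordisms with one output circle per component (the paper's $f_i$), and then transfer the result to $\mathbf{ncCS}$ via the discrete opfibration and Lemma~\ref{lem:opfibration-quasi-grobner}. Your added remark that finiteness of the family $\{f_i\}$ rests on every component of an $ncCob'$-morphism having non-empty input boundary is a point the paper leaves implicit, and making it explicit is a welcome refinement rather than a deviation.
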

\begin{proof}
The functor $\mathbf{ncCS}\to ncCob'$ is a discrete opfibration. By Lemma \ref{lem:opfibration-quasi-grobner} if $ncCob'$ is quasi-Gr\"obner, then $\mathbf{ncCS}$ is quasi-Gr\"obner. 

Let $\Phi:\mathbf{gOS}^{op}\to ncCob'$ be the inclusion described above. Let $x$ be an object in $ncCob'$ and let $f_i$ be all the morphisms in $ncCob'$ from $x$ such that for each connected component of the cobordism $f_i$ its target boundary is a circle and its genus is $0$. Any morphism from $x$ factors uniquely as a composition of some $f_i$ with a morphism in the image of $\Phi$. The morphisms $f_i$ ensure that $\Phi$ satisfies property~(F).
\end{proof}

\section{Further examples}
\label{sec:further-examples}

The most interesting instances of the previously described general constructions $\Tw$ and $\mathcal{U}$ are those that come from operads seen as algebras. Recall that any $C$-colored operad $P$ can be seen as an algebra over a certain operad $sOp_C$ whose algebras are $C$-coloured operads, described in \cite[1.5.6]{berger2007resolution}. 

\begin{definition}
Let $P$ be an operad. The twisted arrow category $\Tw(P)=\Tw_{sOp}(P)$ of $P$ (\cite{burkin2022twisted,hoang2020quillen})  has the operations of $P$ as its objects. Morphisms are represented by planar rooted trees with half-edges. A morphism $f$ in $\Tw(P)$ from an operation $p$ of arity $n$ to an operation of arity $m$ corresponds to unique tree of height~3 (or height~2 if $n=0$), with $m$ input leaves, with exactly one middle vertex, which is marked by the source $p$ of $f$, with the remaining vertices marked by some operations $q_0$, $q_1,\dots, q_n$ from $P$, with the number of input edges of each vertex equal to the arity of the operation that marks this vertex, with half-edges coloured by the corresponding colours of operations so that two halves of the same edge have the same colour, with the lower vertex connected to the middle vertex by its first input edge, with leaves indexed from $1$ to $m$ so that for each vertex the indices of leaves above it increase in planar order. The target of a morphism is computed by the evaluation of the corresponding tree. Composition $f\circ g$ of trees $f$ and $g$ is computed by first grafting for all $i$ the $i$-th upper vertex of $f$ into the leaf of $g$ indexed by $i$,  grafting the root of $g$ to the first input edge of the lower vertex of $f$, which produces a tree of height 5, and then evaluating the maximal subtrees of this tree that do not contain the middle vertex. See Figure~\ref{fig:3leveltrees} for example.
\end{definition}

\begin{figure}[t]
\begin{align*}
  \begin{aligned}\begin{tikzpicture}[optree,
    level distance=18mm,
    level 2/.style={sibling distance=30mm},
    level 3/.style={sibling distance=20mm}]
  \node{}
  child { node[circ,label=-3:$q_0$]{}
    child { node[circ,label=183:$p$]{}
      child { node[circ,label=180:$q_1$]{}
        child { node[label=above:$3$]{} }
        child { node[label=above:$5$]{} } }
      child { node[circ,label=-3:$q_2$]{} }
      child { node[circ,label=-3:$q_3$]{}
        child { node[label=above:$1$]{} } } }
    child { node[label=above:$2$]{} }
    child { node[label=above:$4$]{}}};
\end{tikzpicture}\end{aligned}
\end{align*}
\caption{A tree that represents a morphism from operation $p$ of arity 3 to operation of arity 5 in the twisted arrow category $\Tw(P)$ of an operad $P$.}
\label{fig:3leveltrees}
\end{figure}
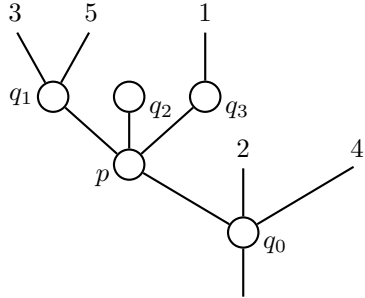

\begin{definition}
The universal enveloping category $\mathcal{U}(P)=\mathcal{U}_{sOp_C}(P)$ of a $C$-coloured operad $P$ has tuples $(c_0,\dots,c_n)$ of elements of $C$ as objects. Its morphisms correspond to the same trees as above, except the middle vertex is not marked by an operation, and in a morphism from $(c_0,\dots,c_n)$ for all $i$ the colour of the $i$-th edge adjacent to the middle vertex is $c_i$, where the $0$-th edge is the output edge.
\end{definition}

The category $\mathcal{U}(P)$ first appears in \cite{fresse2014functor} as the opposite of the category of pointed operators of $P$, i.e.\@ as the category $(\Gamma_P^+)^{op}$.

\begin{remark}
For any operad $P$ the category $\mathcal{C}(P)^{op}$ from \cite{deBrito2018catp} is the wide subcategory of $\Tw(P)$ on morphisms such that the lower vertex is marked by an identity operation. Similarly, the opposite of the PROP corresponding to an operad $P$ is the wide subcategory of $\mathcal{U}(P)$ on morphisms such that the lower vertex is marked by an identity operation. 
\end{remark}

These categories form the following commutative diagram.

\[\begin{tikzcd}
	{\mathcal{C}(P)^{op}} & {\Tw(P)} \\
	{\mathrm{PROP}(P)^{op}} & {\mathcal{U}(P)}
	\arrow[from=1-1, to=1-2, hook]
	\arrow[from=2-1, to=2-2, hook]
	\arrow[from=1-1, to=2-1, "G"']
	\arrow[from=1-2, to=2-2, "G"']
\end{tikzcd}\]
 
We make the following obvious observation.

\begin{lemma}
The functors $G:\mathcal{C}(P)^{op}\to \mathrm{PROP}(P)^{op}$ and $G:\Tw(P)\to\mathcal{U}(P)$ are discrete opfibrations.
\end{lemma}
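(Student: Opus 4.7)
The plan is to verify the defining lifting property of discrete opfibrations directly from the combinatorial descriptions of $\Tw(P)$ and $\mathcal{U}(P)$ given just above. I would first treat $G:\Tw(P)\to\mathcal{U}(P)$, and then deduce the statement for $G:\mathcal{C}(P)^{op}\to\mathrm{PROP}(P)^{op}$ either by observing that the same construction restricts, or by recognising the displayed square as a pullback in $\mathbf{Cat}$ and invoking stability of discrete opfibrations under pullback (which was already used, for instance, in the proof of Lemma~\ref{lem:opfibration-quasi-grobner}).

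For the first functor, fix an object $p$ of $\Tw(P)$, with colour tuple $G(p)=(c_0,c_1,\dots,c_n)$, and a morphism $g$ in $\mathcal{U}(P)$ with source $G(p)$. By construction $g$ is represented by a 3-level tree whose middle vertex is unmarked but whose edges adjacent to the middle vertex carry the colours $(c_0,\dots,c_n)$, while all non-middle vertices are already marked by operations of $P$ with compatible colours. I would define the candidate lift $f$ to be the same 3-level tree with the middle vertex additionally marked by $p$. The colour-compatibility required for $f$ to be a legitimate morphism of $\Tw(P)$ from $p$ is guaranteed precisely by $G(p)$ being the colour tuple already present on the edges adjacent to the middle vertex of $g$, and by construction $G(f)=g$. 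For uniqueness, any lift $f'$ must agree with $g$ on the underlying tree and on all non-middle vertex markings, and must have source $p$, which forces its middle-vertex marking to be $p$; hence $f'=f$.

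For the second functor, the same construction works verbatim, since the lift does not alter the marking on the lower vertex: if $p$ and $g$ both lie in the wide subcategories $\mathcal{C}(P)^{op}$ and $\mathrm{PROP}(P)^{op}$ (i.e.\@ both have identity lower-vertex marking), then so does $f$. Equivalently, the displayed square is a pullback in $\mathbf{Cat}$: an object or morphism of $\Tw(P)\times_{\mathcal{U}(P)}\mathrm{PROP}(P)^{op}$ is exactly a morphism in $\Tw(P)$ whose image in $\mathcal{U}(P)$ has identity lower-vertex marking, which by the definition of $\mathcal{C}(P)^{op}$ forces the $\Tw(P)$-morphism itself to have identity lower vertex. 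Since discrete opfibrations are stable under pullback, the second claim follows from the first. I do not expect any serious obstacle here: the argument is a straightforward unwinding of the combinatorics of 3-level trees and the colour-compatibility conventions in the definitions of the four categories involved, and the only point that warrants care is checking that the colours on edges adjacent to the middle vertex are preserved by $G$, so that marking the middle vertex by $p$ yields a well-defined morphism in $\Tw(P)$.
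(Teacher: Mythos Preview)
Your proposal is correct. The paper treats this lemma as an ``obvious observation'' and gives no proof at all; your direct verification of the lifting property by marking the middle vertex with $p$, together with the remark that the lift does not touch the lower-vertex marking (or the equivalent pullback argument), is exactly the unwinding the paper leaves to the reader.
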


\begin{corollary}
For any set-operad $P$ if $\mathcal{U}(P)$ is quasi-Gr\"obner then $\Tw(P)$ is quasi-Gr\"obner. If $\mathrm{PROP}(P)^{op}$ is quasi-Gr\"obner then $\mathcal{C}(P)^{op}$ is quasi-Gr\"obner. If $\Tw(P)\to\mathcal{U}(P)$ is surjective on objects, then the opposite implications hold.
\end{corollary}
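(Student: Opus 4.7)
The plan is to read the statement as a direct consequence of the two transfer lemmas from Section~\ref{sec:lemmas}, applied to the pair of discrete opfibrations $G:\mathcal{C}(P)^{op}\to\mathrm{PROP}(P)^{op}$ and $G:\Tw(P)\to\mathcal{U}(P)$ recorded in the preceding lemma; no new combinatorial input is needed.

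First I would dispatch the forward implications by invoking Lemma~\ref{lem:opfibration-quasi-grobner}, which says that along a discrete opfibration the quasi-Gr\"obner property passes from the base to the total category. Applying this to $G:\Tw(P)\to\mathcal{U}(P)$ shows that $\mathcal{U}(P)$ quasi-Gr\"obner implies $\Tw(P)$ quasi-Gr\"obner, and applying the same lemma to $G:\mathcal{C}(P)^{op}\to\mathrm{PROP}(P)^{op}$ gives the analogous statement for the left-hand column of the square.

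For the converse implications the tool is Lemma~\ref{lem:grobner-opfibration-left-right}, which propagates the quasi-Gr\"obner property in the opposite direction but requires the opfibration to be surjective on objects. The hypothesis supplies this directly for $G:\Tw(P)\to\mathcal{U}(P)$. For the other vertical arrow I would observe that, as noted in the remark immediately preceding the statement, $\mathcal{C}(P)^{op}$ sits as a wide subcategory of $\Tw(P)$ and $\mathrm{PROP}(P)^{op}$ sits as a wide subcategory of $\mathcal{U}(P)$, so the two pairs share the same object sets and both instances of $G$ act on objects by the same assignment of an operation to its colour signature. Consequently the surjectivity hypothesis on $\Tw(P)\to\mathcal{U}(P)$ automatically gives surjectivity on objects of $\mathcal{C}(P)^{op}\to\mathrm{PROP}(P)^{op}$, and Lemma~\ref{lem:grobner-opfibration-left-right} then applies to both columns.

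There is no real obstacle here: the corollary is a bookkeeping consequence of the previous lemma together with the two general results about discrete opfibrations. The only point worth pausing on is the identification of objects between the rows of the commutative square, which is what allows a single surjectivity assumption to cover both converse implications simultaneously.
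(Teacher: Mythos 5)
Your proposal is correct and matches the paper's argument exactly: the paper's proof is precisely the one-line invocation of Lemma~\ref{lem:opfibration-quasi-grobner} for the forward implications and Lemma~\ref{lem:grobner-opfibration-left-right} for the converses. Your extra observation that the wide-subcategory inclusions force the two vertical functors to coincide on objects, so that one surjectivity hypothesis covers both converse implications, is a detail the paper leaves implicit but is exactly the right justification.
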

\begin{proof} 
This follows from Lemma~\ref{lem:grobner-opfibration-left-right} and Lemma~\ref{lem:opfibration-quasi-grobner}.
\end{proof}

In general one should not expect the categories $\Tw(P)$ and $\mathcal{U}(P)$ to be quasi-Gr\"obner, even for reasonable operads $P$.

\begin{proposition}
\label{pr:dendroidal-not-grobner}
The twisted arrow category $\Tw(sOp)$ of the operad of single-coloured operads, equivalently the Moerdijk--Weiss category $\Omega$, is not quasi-Gr\"obner.
\end{proposition}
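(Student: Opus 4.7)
The plan is to refute quasi-Gr\"obnerness of $\Omega=\Tw(sOp)$ by exhibiting a finitely generated module over it with a non-finitely-generated submodule, contradicting the implication (quasi-Gr\"obner $\Rightarrow$ locally Noetherian modules) of \cite[Theorem~4.3.2]{sam2017grobner}.

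First I would record the elementary vertex-count monotonicity for morphisms in $\Tw(sOp)$: by the construction recalled in this section, a morphism $p\to p'$ is a 3-level tree whose middle vertex carries $p$ and whose other vertices carry operations $q_0,q_1,\dots,q_n$ of $sOp$ (which are themselves operadic trees). Evaluation of the tree gives $|p'|=|p|+|q_0|+\sum_{i=1}^n|q_i|$, where $|-|$ counts vertices of the underlying operadic tree; hence $|p|\leq|p'|$, with equality forcing every $q_i=\eta$ and thus $p'\cong p$ up to the symmetric group action on leaves.

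Next I would take $M=k[\Omega(\eta,-)]$, the representable module on the edge $\eta$ (finitely generated by $\mathrm{id}_\eta$), and let $N\subseteq M$ be the sub-functor with $N(\eta)=0$ and $N(T)=M(T)$ whenever the operadic tree $T$ has at least one vertex. Monotonicity shows this defines a genuine submodule: for any $h\colon T\to T'$ with $|T|\geq 1$, we have $|T'|\geq 1$, so the structure map sends $N(T)$ into $N(T')$. Each corolla $C_n$ (the single-vertex tree of arity $n$) receives at least one morphism from $\eta$ (e.g.\@ wrapping with $q_0=\eta$, $q_1=C_n$), so $N(C_n)=M(C_n)\neq 0$ for every $n\geq 0$.

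Finally I would suppose, toward contradiction, that $N$ is generated by finitely many elements $g_1,\dots,g_k$ with $g_j\in N(T_j)$ and $|T_j|\geq 1$. For each $n$, the nonzero module $N(C_n)$ must lie in the $k$-span of images $h_*(g_j)$ for morphisms $h\colon T_j\to C_n$ in $\Omega$, so some such $h$ exists. Monotonicity gives $1=|C_n|\geq|T_j|\geq 1$, forcing $|T_j|=1$ and $T_j\cong C_{m_j}$; the equality case then forces $C_{m_j}\cong C_n$, i.e.\@ $m_j=n$. Hence $n\in\{m_1,\dots,m_k\}$ for every $n\geq 0$, which is absurd. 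The only computation to check carefully is the additivity of vertex counts under evaluation of a 3-level tree in $\Tw$, which is immediate once the construction is unpacked, so no serious obstacle is expected.
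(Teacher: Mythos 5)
Your argument hinges on the ``vertex-count monotonicity'' lemma, and that lemma is false; this is a genuine gap that breaks both the construction of $N$ and the final pigeonhole step. The evaluation of the $3$-level tree is operadic composition \emph{in $sOp$}, which is \emph{substitution} of trees into vertices, not grafting: the arity of $p$ as an operation of $sOp$ is its number of vertices, and those vertices are exactly the inputs into which the $q_1,\dots,q_n$ get substituted, while $p$ itself is substituted into the first vertex of $q_0$. So the correct count is $|p'|=(|q_0|-1)+\sum_{i=1}^{n}|q_i|$ with $n=|p|$ --- the vertices of $p$ do not survive additively, and whenever a vertex of $p$ is unary the corresponding $q_i$ may be the trivial tree $\eta$ with $|q_i|=0$, which \emph{deletes} that vertex. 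These are precisely the degeneracies of $\Omega$ (recall $\Delta\subset\Omega$ has degeneracies), e.g.\@ the morphism $\sigma\colon C_1\to\eta$. Consequently your $N$ is not a submodule: $N(C_1)=M(C_1)=k[\Omega(\eta,C_1)]\neq 0$, but $\sigma_*$ carries it onto $M(\eta)$, which you have declared to be $0$ in $N$. The concluding step fails for the same reason: $C_1$ admits morphisms to $C_n$ for \emph{every} $n$ (factoring through $\eta$), so ``$|T_j|=1$ forces $T_j\cong C_n$'' is unavailable, and a single generator sitting on $C_1$ (or on $\eta$ after the correction) would hit all corollas at once.

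This is why the paper's proof does not use vertex counts at all. It cuts off by the number of leaves (taking $N(p)=M(p)$ only when $p$ has at least $3$ leaves, with $M$ generated by the binary corolla $id_2$ rather than by $\eta$), and the antichain is not the family of corollas but the two-vertex trees $p_i$ with a binary root and an $i$-ary upper vertex: the essential combinatorial input is that no object $q$ with $N(q)\neq 0$ admits morphisms to both $p_i$ and $p_j$ for $i\neq j$, which is what a vertex- or corolla-based argument cannot deliver. If you want to salvage your approach you must replace the monotonicity lemma by an invariant that actually is preserved under substitution (leaf counts of carefully chosen sources, or the arities of the binary operations occurring), and choose targets that form a genuine antichain over the support of $N$.
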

\begin{proof}
For $i\geq 3$ let operations $p_i$ be the trees with two vertices, with the root vertex having 2 input edges, with the second vertex attached to the first input edge of the root vertex, with the second vertex having $i$ input edges. Let $M$ be the $\mathbb{Q}$-module $\mathbb{Q}Hom(id_2,-)$ over $\Tw(sOp)$ generated by $id_{2}$, the tree with one vertex with two input leaves. Up to automorphisms, there is only one morphism $id_{2}\to p_i$ for all $i$, and there is no operation $q$ with morphisms $q\to p_i$ and $q\to p_j$ for $i\neq j$. Let $N$ be the submodule of $M$ such that $N(p)=M(p)$ if $p$ has at least 3 leaves and $N(p)=0$ otherwise. This module is not finitely generated: $N(p_i)\neq 0$ for all $i$, and these cannot be generated by a finite number of objects. Thus $\Omega$ is not quasi-Gr\"obner.
\end{proof}

Similar reasoning shows that the twisted arrow categories (and thus the universal enveloping categories) of the operads that encode planar, cyclic, modular operads, PROPs, wheeled PROPs, properads, and similar operad-like structures are not quasi-Gr\"obner. However, for more basic operads we recover some classical examples. Table~\ref{tab:my_label} lists the categories $\Tw(P)$, $\mathcal{U}(P)$, $\mathcal{C}(P)^{op}$ and $\mathrm{PROP}(P)^{op}$ (or in some cases the skeletons of these categories) for the operads $uAs$ of monoids, $As$ of semigroups, $uCom$ of commutative monoids, and $Com$ of commutative semigroups. 

\begin{table}[t]
    \centering
\begin{tabular}{|l|l|l|l|l|l|l|}
\hline
$P$ & $\Tw(P)$ & $R_{>0}$ & $\mathcal{C}(P)^{op}$ & $R_{>0}\cap \mathcal{C}(P)^{op}$ & $\mathcal{U}(P)$ & $\mathrm{PROP}(P)^{op}$\\
\hline
$uAs$ & $\Delta$ & $\mathbf{OI}_+$ & $\Delta_{ep}$ & $\mathbf{OI}_{+ep}$ & $\mathbf{FA}_*(\mathrm{as})^{op}$ & $\mathbf{FA}(\mathrm{as})^{op}$ \\
$As$ & $\mathbf{OI}_{++}$ & $\mathbf{OI}_{++}$ & $\mathbf{OI}_{++ep}$ & $\mathbf{OI}_{++ep}$ & $\mathbf{FS}_*(\mathrm{as})^{op}$ & $\mathbf{FS}(\mathrm{as})^{op}$ \\
$uCom$ & $\mathbf{FA}_*^{op}$ & $\mathbf{FS}_*^{op}$ & $\mathbf{FA}^{op}$ & $\mathbf{FS}^{op}$ & $\mathbf{FA}_*^{op}$ & $\mathbf{FA}^{op}$ \\
$Com$ & $\mathbf{FS}_{+*}^{op}$ & $\mathbf{FS}_{+*}^{op}$ & $\mathbf{FS}_+^{op}$ & $\mathbf{FS}_+^{op}$ & $\mathbf{FS}_*^{op}$ & $\mathbf{FS}^{op}$\\
\hline
\end{tabular}
    \caption{Classical examples.}
    \label{tab:my_label}
\end{table}

In this table the category $R_{>0}$ is the wide subcategory of $\Tw(P)$ that consists of morphisms such that all the vertices in the corresponding trees, except possibly the source vertex, have non-zero arity (the notation $R_{>0}$ comes from the generalized Reedy structure on the categories in the table). The category $\Delta$ is the simplex category; $\mathbf{OI}$ is the category of finite ordinals and order preserving injections; $\Delta_{ep}$ is the interval category, i.e.\@ the wide subcategory of $\Delta$ on endpoint-preserving maps; $\mathbf{FA}(\mathrm{as})$ is the category of non-commutative sets (the category $\Delta\mathbf{S}$ from \cite{fiedorowicz1991crossed}, see also \cite{pirashvili25hochschild}, except in our example the category $\mathbf{FA}(\mathrm{as})$ additionally contains the empty set); $\mathbf{FS}(\mathrm{as})$  is the subcategory of surjections in $\mathbf{FA}(\mathrm{as})$; $\mathbf{FA}$ is the category of finite sets; $\mathbf{FS}$ is the subcategory of surjections in $\mathbf{FA}$. The subscripts  denote the corresponding subcategories on sets that have $(+)$ at least one element, $(++)$ at least two elements, $(*)$ a marked element that is preserved by the maps, $(+*)$ at least one element in addition to the marked element, while $(ep)$ denotes the wide subcategory of endpoint-preserving maps. Notice that $\Delta_{ep}$ is equivalent to $\Delta^{op}$. 

\begin{lemma}
\label{lem:r-zero-to-full}
Let $P$ be an operad such that the set $P(;c)$ is finite for all colours $c$ of $P$, and let $\cA$ be the category $\Tw(P)$, $\mathcal{U}(P)$, $\mathcal{C}(P)^{op}$, or $\mathrm{PROP}(P)^{op}$. Let $R_{>0}$ be the wide subcategory of $\cA$ on morphisms represented by trees with all the non-source vertices marked by operations of non-zero arity. If $R_{>0}$ is quasi-Gr\"obner, then $\cA$ is quasi-Gr\"obner.
\end{lemma}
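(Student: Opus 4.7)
The plan is to show that the inclusion $\iota : R_{>0} \hookrightarrow \cA$ is essentially surjective (bijective on objects, since $R_{>0}$ is wide) and satisfies property (F); the remark following the definition of quasi-Gr\"obner category then transfers the property from $R_{>0}$ to $\cA$. Only property (F) requires real work.

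Fix an object $d$ of $\cA$. The idea is to enumerate the finitely many ways of ``pre-absorbing'' nullary decorations into $d$. Let $\{1,\dots,n\}$ index the non-source positions at which a tree decoration can be nullary: in $\cC(P)^{op}$ and $\Tw(P)$ these are the upper positions, enumerated by the arity of $d$; in $\mathrm{PROP}(P)^{op}$ and $\mathcal{U}(P)$ they are the input entries of the tuple $d$. The lower vertex, where present, has its first input edge connected to the middle vertex and thus has arity at least $1$, so it contributes no nullary position. For each subset $I\subseteq\{1,\dots,n\}$ and each map $\phi$ assigning to every $i\in I$ an element of $P(;c_i)$, where $c_i$ is the colour of the $i$-th input of $d$, let $f_{I,\phi}: d\to d_{I,\phi}$ be the morphism of $\cA$ whose tree places $\phi(i)$ at position $i\in I$ and an identity at every other position (and an identity at the lower vertex if relevant). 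Because $\{1,\dots,n\}$ is finite and each $P(;c)$ is finite by hypothesis, the family $\{f_{I,\phi}\}$ is finite.

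Given an arbitrary morphism $f: d\to c$ in $\cA$, let $q_1,\dots,q_n$ be the operations decorating the non-source positions of its tree; set $I=\{i : q_i \text{ is nullary}\}$ and $\phi(i)=q_i$ for $i\in I$. Then $f=g\circ f_{I,\phi}$, where $g: d_{I,\phi}\to c$ carries the operations $q_i$ for $i\notin I$ at the corresponding positions (together with the original lower-vertex decoration in the $\Tw$ and $\mathcal{U}$ cases). Every non-source decoration of $g$ has positive arity, so $g\in R_{>0}$; this establishes property (F). One also checks that $R_{>0}$ really is a subcategory, since composing two positive-arity operations gives a positive-arity operation, so that the hypothesis of the lemma is well posed.

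No serious obstacle is anticipated. The only care needed is the case-by-case unpacking of what a ``non-source position'' means in each of the four flavours of $\cA$, but in each case the decomposition $f=g\circ f_{I,\phi}$ is forced by the data of $f$, and finiteness of $\{f_{I,\phi}\}$ follows immediately from the finiteness of each $P(;c)$.
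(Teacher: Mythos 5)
Your proposal is correct and follows essentially the same route as the paper: the paper's proof likewise takes, for each object, the finitely many morphisms whose non-source vertices are marked by identities or arity-$0$ operations (your family $\{f_{I,\phi}\}$) and observes that any morphism factors through one of them with an $R_{>0}$ remainder. Your write-up just makes explicit the indexing by $(I,\phi)$ and the case analysis across the four flavours of $\cA$, which the paper leaves implicit.
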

\begin{proof}
The inclusion $R_{>0}\to \cA$ is essentially surjective. For an object $p$ of $\cA$ let $f_i$ be all the morphisms from $p$ represented by trees whose non-source vertices are marked either by identity operations or by operations of arity 0, with leaves permuted trivially. These morphisms ensure the property (F) for the inclusion $R_{>0}\to \cA$.
\end{proof}

\begin{proposition}
The categories in Table~\ref{tab:my_label} are quasi-Gr\"obner.
\end{proposition}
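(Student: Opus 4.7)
The plan is to walk through Table~\ref{tab:my_label} using three tools: Lemma~\ref{lem:r-zero-to-full} to reduce each column to its $R_{>0}$ restriction; the discrete opfibrations $G:\Tw(P)\to\mathcal{U}(P)$ and $G:\mathcal{C}(P)^{op}\to\mathrm{PROP}(P)^{op}$ combined with Lemmas~\ref{lem:grobner-opfibration-left-right} and~\ref{lem:opfibration-quasi-grobner} to move horizontally across the table; and the classical (quasi-)Gr\"obner results of \cite{sam2017grobner} for the basic categories in the two middle columns.

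Since each of the operads $uAs$, $As$, $uCom$, $Com$ has at most one arity-$0$ operation per colour, Lemma~\ref{lem:r-zero-to-full} applies to all four outer columns of the table, reducing the claim to the two middle columns $R_{>0}$ and $R_{>0}\cap\mathcal{C}(P)^{op}$. The functors $G$ restrict to discrete opfibrations on the $R_{>0}$ subcategories, because the defining condition on non-source vertices is preserved under the unique lifts. They are moreover surjective on objects for each of the four operads under consideration (an object of $\mathcal{U}(P)$ or $\mathrm{PROP}(P)^{op}$ in these cases is just a tuple of colours, and lifts to any operation with matching input/output colours). By Lemma~\ref{lem:grobner-opfibration-left-right}, quasi-Gr\"obnerness of the $\Tw$-column or $\mathcal{C}^{op}$-column entries therefore passes to the $\mathcal{U}$-column and $\mathrm{PROP}^{op}$-column entries.

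It then suffices to identify the entries in the $R_{>0}$ and $R_{>0}\cap\mathcal{C}(P)^{op}$ columns with known (quasi-)Gr\"obner categories. For $uAs$ we use $\Delta$ and $\Delta_{ep}\cong\Delta^{op}$, both quasi-Gr\"obner in \cite{sam2017grobner}, whose $R_{>0}$ subcategories are $\mathbf{OI}_+$ and $\mathbf{OI}_{+ep}$; these inherit property (G1) from the standard lexicographic admissible order on $\mathbf{OI}$ and property (G2) because their slice posets embed into those of $\mathbf{OI}$. For $As$ the same argument applies to $\mathbf{OI}_{++}$ and $\mathbf{OI}_{++ep}$. For $uCom$ and $Com$ the relevant entries $\mathbf{FA}^{op}$, $\mathbf{FS}^{op}$, $\mathbf{FS}_+^{op}$, $\mathbf{FS}_{+*}^{op}$, $\mathbf{FS}_*^{op}$ are quasi-Gr\"obner via the ordered-surjection category $\mathbf{OS}^{op}$ of \cite{sam2017grobner}, and their analogues with basepoints follow by the obvious essentially surjective property-(F) functors adjoining or forgetting a basepoint.

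The main obstacle is careful bookkeeping rather than a genuinely new combinatorial argument: each of the sixteen table entries must be matched with a concrete description and the corresponding known result cited, but every step uses only the lemmas already proved in this paper together with \cite{sam2017grobner}.
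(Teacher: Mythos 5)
Your overall strategy --- reduce to the $R_{>0}$ columns via Lemma~\ref{lem:r-zero-to-full}, transfer horizontally via the discrete opfibrations $G$, and identify the middle columns with classical Gr\"obner categories --- is the same as the paper's. But there is a concrete false step in the horizontal transfer: the claim that $G$ (or its restriction to the $R_{>0}$ subcategories) is surjective on objects for each of the four operads, because ``a tuple of colours lifts to any operation with matching input/output colours,'' fails for $As$ and $Com$. These operads have no operations of arity $0$, so the object of $\mathcal{U}(P)$ (resp.\ of $\mathrm{PROP}(P)^{op}$) corresponding to the empty input tuple has no preimage in $\Tw(P)$ (resp.\ in $\mathcal{C}(P)^{op}$), and is not isomorphic to anything in the image. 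The table itself records this: $\Tw(Com)=\mathbf{FS}_{+*}^{op}$ and $\mathcal{U}(Com)=\mathbf{FS}_{*}^{op}$ differ exactly by the one-point pointed set. Since Lemma~\ref{lem:grobner-opfibration-left-right} needs surjectivity on objects (property (F) is witnessed by identities at preimages), your transfer breaks for the $As$ and $Com$ rows. The paper's fix for $As$ is to use the subcategory $R_{>0}$ of $\Tw(uAs)$ rather than $\Tw(As)$ --- it still contains the arity-$0$ operation of $uAs$ as an object, so it does surject onto $\mathcal{U}(As)=R_{>0}(\mathcal{U}(uAs))$ --- and for the $uCom$/$Com$ rows it abandons the opfibration route for $\mathbf{FS}_{*}^{op}$ entirely, instead exhibiting the object-surjective property-(F) inclusion $\mathbf{OS}^{op}\to\mathbf{FS}_{*}^{op}$ (basepoint $0$, the morphisms $f_i$ being all automorphisms).

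Two smaller points. For the non-full subcategories $\mathbf{OI}_{+ep}\subset\mathbf{OI}_{+}$ and $\mathbf{OI}_{++ep}\subset\mathbf{OI}_{++}$, your assertion that ``their slice posets embed into those of $\mathbf{OI}$'' is precisely what must be proved: in a wide non-full subcategory a witness $h$ with $h\circ f=g$ may exist upstairs but not in the subcategory, so the comparison of slice posets is not automatically order-reflecting. The paper settles this by checking property (S) and invoking Proposition 4.4.2 of Sam--Snowden. Finally, ``adjoining a basepoint'' $\mathbf{FS}^{op}\to\mathbf{FS}_{*}^{op}$ does not obviously satisfy property (F), since a pointed surjection onto $S\sqcup\{*\}$ may send non-basepoint elements to the basepoint; this is another reason the paper routes through $\mathbf{OS}^{op}$ instead.
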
 
\begin{proof}
The only new examples are the categories related to non-commutative sets. We consider other examples for the sake of completeness. We rely on the fact that the categories $\mathbf{OI}$, $\mathbf{FA}^{op}$ and $\mathbf{FS}^{op}$ are  Gr\"obner or quasi-Gr\"obner.

Since the category $\mathbf{OI}$ is Gr\"obner, the categories $\mathbf{OI}_+$ and $\mathbf{OI}_{++}$ are Gr\"obner. Lemma~\ref{lem:r-zero-to-full} implies that $\Tw(uAs)\simeq\Delta$ is quasi-Gr\"obner. The inclusions $\mathbf{OI}_{+ep}\to\mathbf{OI}_+$ and $\mathbf{OI}_{++ep}\to\mathbf{OI}_{++}$ satisfy property (S), therefore $\mathbf{OI}_{+ep}$ and $\mathbf{OI}_{++ep}$ are Gr\"obner, and by Lemma~\ref{lem:r-zero-to-full} the category $\Delta_{ep}$ is quasi-Gr\"obner. The discrete opfibrations $\Tw(uAs)\to\mathcal{U}(uAs)$ and $\mathcal{C}(uAs)^{op}\to\mathrm{PROP}(uAs)^{op}$ are surjective on objects, which implies that $\mathbf{FA}_*(\mathrm{as})^{op}$ and $\mathbf{FA}(\mathrm{as})^{op}$ are quasi-Gr\"obner. There is a discrete opfibration from the subcategory $R_{>0}$ of $\Tw(uAs)$ to $\mathcal{U}(As)$ that is surjective on objects, and its restriction $\mathcal{C}(uAs)^{op}\cap R_{>0}\to \mathrm{PROP}(As)^{op}$ is also a discrete opfibration surjective on objects, which implies that $\mathbf{FS}_*(\mathrm{as})^{op}$ and $\mathbf{FS}(\mathrm{as})^{op}$ are quasi-Gr\"obner.

Since $\mathbf{FS}^{op}$ is quasi-Gr\"obner, $\mathbf{FS}_+^{op}$ is quasi-Gr\"obner. Recall that the category $\mathbf{OS}^{op}$, the opposite of the category of ordered finite surjections, is Gr\"obner. Consider its objects as the sets $\{0,\dots,n\}$, with $0$ as the marked element that is preserved by the maps. The inclusion $\mathbf{OS}^{op}\to \mathbf{FS}_*^{op}$ is surjective on objects and satisfies property (F) (with the morphisms $f_i$ being all automorphisms of an object), thus $\mathbf{FS}_*^{op}$ is quasi-Gr\"obner, and thus $\mathbf{FS}_{+*}^{op}$ is quasi-Gr\"obner. Lemma~\ref{lem:r-zero-to-full} implies that $\mathbf{FA}_*^{op}$ is quasi-Gr\"obner. 
\end{proof}

This leads to another example of a quasi-Gr\"obner category.

\begin{proposition}
Connes cyclic category $\Lambda$ and its subcategory of cyclic injections are quasi-Gr\"obner.
\end{proposition}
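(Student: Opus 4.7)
The plan is to leverage the classical presentation of $\Lambda$ as an extension of the simplex category. There is an identity-on-objects inclusion $\Phi:\Delta\hookrightarrow\Lambda$, and $\Lambda$ is generated over $\Delta$ by the cyclic rotations $\tau_n:[n]\to[n]$ with $\tau_n^{n+1}=\mathrm{id}$. The key structural fact I will invoke is that every morphism $f:[n]\to[m]$ in $\Lambda$ admits a factorization $f=\Phi(g)\circ\tau_n^k$ for some $g\in\Delta([n],[m])$ and some $0\le k\le n$; this is the standard normal form coming from the cyclic relations between face/degeneracy maps and $\tau_\bullet$, so I would merely cite it rather than reprove it.

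From this presentation, essential surjectivity of $\Phi$ is immediate (it is bijective on objects). To verify property (F) for $\Phi$ at an object $d=[n]$ of $\Lambda$, I will take $c_0=c_1=\cdots=c_n=[n]$ in $\Delta$ together with the $(n+1)$ morphisms $f_i=\tau_n^i:[n]\to\Phi([n])=[n]$ in $\Lambda$. The normal form above then says precisely that any $f:[n]\to\Phi(c)$ in $\Lambda$ factors as $\Phi(g)\circ f_i$ for some $i$ and some $g\in\Delta([n],c)$, which is the required factorization. Combining this with the fact that $\Delta\simeq\Tw(uAs)$ is quasi-Gr\"obner (from the table of classical examples in the previous section) and the remark that the composition of essentially surjective functors satisfying property (F) again satisfies property (F), I conclude that $\Lambda$ is quasi-Gr\"obner.

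For the subcategory of cyclic injections, I will run the same argument with $\Delta$ replaced by its wide subcategory $\Delta_{\mathrm{inj}}$ of injective simplicial maps. Since $\tau_n$ is an automorphism, the factorization $f=\Phi(g)\circ\tau_n^k$ shows that $f$ is injective in $\Lambda$ iff $g$ is injective in $\Delta$; hence the $(n+1)$ rotations $\tau_n^i$ continue to serve as the witnesses for property (F) for the inclusion $\Delta_{\mathrm{inj}}\hookrightarrow\Lambda_{\mathrm{inj}}$. Since $\Delta_{\mathrm{inj}}$ is equivalent to $\mathbf{OI}_+$, which is Gr\"obner (used in the proof of the previous proposition), this yields the quasi-Gr\"obner property for the subcategory of cyclic injections.

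The main potential obstacle is not mathematical but bookkeeping: making sure the factorization $f=\Phi(g)\circ\tau_n^k$ is stated on the correct side (source versus target) so that the $f_i$ in property (F) really are morphisms out of $d$ rather than into it, and so that injectivity is correctly preserved under the factorization. Both are clear from the explicit relations defining $\Lambda$, and the argument then reduces cleanly to known quasi-Gr\"obner categories from the previous section.
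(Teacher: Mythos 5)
Your proof is correct, but it follows a genuinely different route from the paper's. The paper identifies $\Lambda$ with the twisted arrow category $\Tw_{cOp}(uAs_c)$ of the cyclic operad of monoids, so that the canonical functor $\Tw_{cOp}(uAs_c)\to\mathcal{U}_{cOp}(uAs_c)\simeq\mathbf{FA}(\mathrm{as})^{op}$ is a discrete opfibration onto the category of non-commutative sets, already shown quasi-Gr\"obner in the preceding proposition; Lemma~\ref{lem:opfibration-quasi-grobner} (pulling back quasi-Gr\"obnerness along discrete opfibrations) then finishes the argument, and the injective case is handled by restricting the same opfibration over $\mathbf{FS}(\mathrm{as})^{op}$. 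You instead bypass the operadic identification and the non-commutative sets entirely: you use the crossed-simplicial-group normal form $\Lambda([n],[m])\cong\Delta([n],[m])\times\mathbb{Z}/(n+1)$ to check property~(F) directly for the bijective-on-objects inclusion $\Delta\hookrightarrow\Lambda$, with the finitely many rotations $\tau_n^i$ as the witnesses, and then quote that $\Delta$ (respectively $\Delta_{\mathrm{inj}}\simeq\mathbf{OI}_+$) is quasi-Gr\"obner (respectively Gr\"obner) together with the remark that property~(F) composes. Both arguments are sound and ultimately rest on the Gr\"obner property of $\mathbf{OI}$; yours is more elementary and self-contained modulo the classical normal form for $\Lambda$, while the paper's fits the operadic framework of the section and reuses its opfibration machinery uniformly (which is why the paper can dispatch the statement in two sentences). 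Your one flagged worry --- which side the automorphism sits on in the factorization --- is resolved the way you need: the standard decomposition writes $f=\phi\circ\tau_n^k$ with the automorphism precomposed on the source, so the $f_i=\tau_n^i$ really are morphisms out of $d=[n]$, and injectivity of $f$ is indeed equivalent to injectivity of $\phi$ since $\tau_n$ is invertible.
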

\begin{proof}
There is discrete opfibration $\Lambda\simeq\Tw_{cOp}(uAs_c)\to\mathcal{U}_{cOp}(uAs_c)\simeq\mathbf{FA}(\mathrm{as})^{op}$, where $cOp$ is the operad whose algebras are cyclic operads, and $uAs_c$ is the cyclic operad of monoids, see \cite[Proposition~2.31 and 2.32]{burkin2022twisted}. Its restriction to the category of injections (or to the opposite of the category of surjections, which is the same category by the self-duality $\Lambda\simeq\Lambda^{op}$) is a discrete opfibration over $\mathbf{FS}(\mathrm{as})^{op}$.
\end{proof}

The remaining examples come from operadic categories.

\begin{definition}
Let $S$ be a semigroup. Let $N(S)$ be the presheaf over $\mathbf{OI}_{++}$ such that $N(S)([n])=S^n$, with presheaf maps defined in the same way as in the nerve construction for monoids. Take the restriction of $N(S)$ to $\mathbf{OI}_{++ep}$. The category $\mathbf{OI}_{++ep}/N(S)$ has finite non-empty sequences of elements $s_i$ of $S$ as objects. Morphisms correspond to substitutions of elements $s_i$ by sequences $s_{i1},\dots,s_{ik_i}$ such that $s_{i1}\cdots s_{ik_i}=s_i$. 
\end{definition}

The category $\mathbf{OI}_{++ep}/N(S)$ is the opposite of the operadic category described in \cite[A.1]{mozgovoy2022operadic}. In the definition of the latter category the subcategory of surjections of $\Delta$ is used instead of $\mathbf{OI}_{++ep}^{op}$, but these two categories are equivalent. Notice that the categories $\mathbf{OI}_{++}/N(S)$ and $\mathbf{OI}_{++ep}/N(S)$ can be seen as the categories $\Tw(P)$ and $\mathcal{C}(P)^{op}$ where $P$ is the Baez--Dolan plus construction of the semigroup $S$ seen as an algebra over the operad $As$ of semigroups. 

\begin{proposition}
Let $S$ be a semigroup. If $S$ is not finite, then $\mathbf{OI}_{++}/N(S)$ is not quasi-Gr\"obner.
\end{proposition}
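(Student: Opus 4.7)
The plan is to adapt the strategy of Proposition~\ref{pr:dendroidal-not-grobner}: exhibit a finitely generated $\mathbf{OI}_{++}/N(S)$-module over $\mathbb{Q}$ that admits a non-finitely generated submodule, thereby violating local Noetherianity. Since the excerpt identifies $\mathbf{OI}_{++}/N(S)$ with $\Tw(S^+)$, where $S^+$ is the Baez--Dolan plus construction of $S$ as an algebra over $As$, I will work inside $\Tw(S^+)$ using the tree description of morphisms from Section~\ref{sec:further-examples}.

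First I would unravel what a morphism in $\Tw(S^+)$ looks like concretely. Since each operation space of $S^+$ is either a singleton or empty (depending on whether $s_1\cdots s_n=s_0$), the only freedom in the $3$-level tree is the combinatorial planar/indexing data. Working through the planar and increasing-index constraints should show that a morphism from $(s_1,\ldots,s_n)$ to $(u_1,\ldots,u_m)$ amounts to a choice of positive integers $\alpha_1,\ldots,\alpha_n$ with $\alpha_1+\cdots+\alpha_n\leq m$ such that each consecutive block $u_{\alpha_1+\cdots+\alpha_{i-1}+1}\cdots u_{\alpha_1+\cdots+\alpha_i}$ equals $s_i$; equivalently, a factorization of some prefix of the target into $n$ consecutive non-empty blocks whose products match the source.

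With this description in hand, the rigidity I need is: every morphism into $(s,u)$ from a source of length $\geq 2$ is the identity of $(s,u)$. This follows immediately from the constraint $\alpha_1+\alpha_2\leq 2$ with $\alpha_j\geq 1$, which forces $\alpha_1=\alpha_2=1$ and pins the source down to $(s,u)$. I then fix any $s\in S$, take $M=\mathbb{Q}\mathrm{Hom}((s),-)$ (finitely generated by its generator at $(s)$), and let $N\subseteq M$ be the submodule with $N(c)=M(c)$ when $c$ has length $\geq 2$ and $N(c)=0$ otherwise; since $\Tw(S^+)$-morphisms never decrease arity, $N$ is well-defined. For every $u\in S$ the canonical morphism $\eta_u\colon(s)\to(s,u)$ corresponding to $\alpha_1=1$ gives a nonzero element of $N((s,u))$. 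If $N$ were generated by finitely many elements supported at objects $c_1,\ldots,c_r$ of length $\geq 2$, then using infinitude of $S$ I would pick $u\in S$ with $(s,u)\notin\{c_1,\ldots,c_r\}$; by rigidity there are no morphisms $c_k\to(s,u)$, so the submodule generated vanishes at $(s,u)$, contradicting $\eta_u\neq 0$.

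The main obstacle should be the first step: verifying that the tree-with-leaf-indexing data of $\Tw(S^+)$ really does collapse to the consecutive-prefix-block description, and in particular checking how the rule that leaf indices above each vertex increase in planar order, together with the requirement that the middle vertex occupies the first input of the lower vertex, forces the blocks to be consecutive and to lie in an initial segment of the target. Once this combinatorial picture is pinned down, the rigidity observation and the finite-generation obstruction follow in a manner closely paralleling the argument of Proposition~\ref{pr:dendroidal-not-grobner}.
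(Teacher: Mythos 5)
Your argument is correct and is essentially the paper's own proof spelled out: the paper likewise fixes $s_1$, uses the objects $(s_1,s_i)$ for infinitely many distinct $s_i$ together with the inclusions $(s_1)\to(s_1,s_i)$, and transplants the finitely generated module and its length-$\geq 2$ submodule from Proposition~\ref{pr:dendroidal-not-grobner}. The only small inaccuracy is in your combinatorial description: a morphism $(s_1,\dots,s_n)\to(u_1,\dots,u_m)$ in $\mathbf{OI}_{++}/N(S)$ corresponds to a factorization of a contiguous substring of the target into $n$ non-empty consecutive blocks, not necessarily a prefix (injections in $\mathbf{OI}_{++}$ need not preserve endpoints), but this does not affect your rigidity claim for length-two targets, since an injection $[l]\to[2]$ with $l\geq 2$ is still forced to be the identity.
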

\begin{proof}
The proof is analogous to that of Proposition~\ref{pr:dendroidal-not-grobner}: the role of operations $p_i$ is played by sequences $(s_1,s_i)$ for pairwise different $s_i$ in $S$, and the role of morphisms $id_2\to p_i$ is played by inclusions $(s_1)\to (s_1,s_i)$.
\end{proof}

\begin{proposition}
Let $S$ be a semigroup such that for any element $s$ in $S$ the number of decompositions of $s$ into a product of non-identity elements of $S$ is finite. Then the category $\mathbf{OI}_{++ep}/N(S)$ is Gr\"obner.
\end{proposition}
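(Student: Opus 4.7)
The plan is to verify (G1) and (G2) directly for $\cC := \mathbf{OI}_{++ep}/N(S)$: I pull (G1) back from the base $\mathbf{OI}_{++ep}$ through the faithful forgetful projection, and I read off (G2) from the finiteness hypothesis on $S$.

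For (G1), the projection $U:\cC\to\mathbf{OI}_{++ep}$ sending $([n],(s_1,\dots,s_{n-1}))\mapsto[n]$ is faithful, because a morphism in the slice is by definition a morphism of the base category satisfying a compatibility condition with the chosen section of $N(S)$, so two distinct slice morphisms have distinct underlying maps in $\mathbf{OI}_{++ep}$. The category $\mathbf{OI}_{++ep}$ is Gr\"obner by the proof of the preceding proposition about the categories of Table~\ref{tab:my_label} (it admits a faithful functor to $\mathbf{OI}_{++}$ satisfying property~(S)), so in particular it satisfies (G1); Lemma~\ref{lem:faithful-g1} then transports (G1) along $U$ to $\cC$.

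For (G2), fix an object $c=(s_1,\dots,s_n)$ of $\cC$. Unwinding the description given immediately before the statement, a morphism out of $c$ is specified uniquely by a tuple of ordered factorizations $s_i=s_{i,1}\cdots s_{i,k_i}$ in $S$ with $k_i\geq 1$: the target is the concatenation $(s_{1,1},\dots,s_{n,k_n})$, and the underlying endpoint-preserving injection is determined by the block sizes. Under the hypothesis, each $s_i$ admits only finitely many such factorizations, so the set of morphisms out of $c$ is finite, whence $|c/\cC|$ is a finite, hence Noetherian, poset. No Kruskal-type argument is needed here: the content of the hypothesis is precisely that the slice is locally finite at every object, and the only step with any content is recognising that the forgetful functor to $\mathbf{OI}_{++ep}$ is faithful so that (G1) can be imported.
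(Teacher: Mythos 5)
Your (G1) argument is the same as the paper's and is fine: the projection to $\mathbf{OI}_{++ep}$ is faithful, $\mathbf{OI}_{++ep}$ satisfies (G1), and Lemma~\ref{lem:faithful-g1} transports it back.

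Your (G2) argument has a genuine gap. The hypothesis only bounds the number of decompositions of $s$ into products of \emph{non-identity} elements; it does not forbid $S$ from having an identity $e$, and when it does, every element has infinitely many factorizations obtained by padding with copies of $e$. For instance, if $S$ is the trivial monoid $\{e\}$, the object $(e)$ admits distinct morphisms to $(e,e)$, $(e,e,e)$, and so on — each with a different target and a different underlying injection — so the set of morphisms out of an object is \emph{not} finite and $|c/\cC|$ is not a finite poset. Your claim that ``each $s_i$ admits only finitely many such factorizations'' silently strengthens the hypothesis from ``finitely many factorizations into non-identity elements'' to ``finitely many factorizations.''

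The paper's proof is exactly your argument in the case where $S$ has no identity, but it then does the real work in the remaining case: given a sequence of morphisms $f_i$ from $x=(s_1,\dots,s_n)$, it first extracts a subsequence in which each $s_j$ is factored the same way \emph{up to insertions of $e$} (possible by the finiteness hypothesis), and then applies a Dickson-type extraction so that, for each $j$ and each gap (before the first non-identity factor, between consecutive ones, and after the last), the number of inserted $e$'s is non-decreasing along the subsequence. One then checks that such coordinatewise growth of the $e$-multiplicities yields $f_{i_1}\leq f_{i_2}$ in the divisibility preorder, since the extra $e$'s can be produced by a further morphism decomposing each $e$ as a product of $e$'s. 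You need to add this second half; without it the proof only covers semigroups without identity, whereas the statement (and the subsequent application to $\mathbb{Z}/2$ in the paper) is meant to include monoids.
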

\begin{proof}
The projection $\mathbf{OI}_{++ep}/N(S)\to \mathbf{OI}_{++ep}$ is faithful. By Lemma~\ref{lem:faithful-g1} the category $\mathbf{OI}_{++ep}/N(S)$ satisfies property (G1).

If $S$ does not have the identity element, then the slice categories of $\mathbf{OI}_{++ep}/N(S)$ are finite, and the property (G2) holds. Assume that $S$ has the identity element $e$. The condition on $S$ implies that $e$ cannot be decomposed into non-trivial product of elements of $S$.  To prove property (G2) let $x=(s_1,\dots,s_n)$ be an object in $\mathbf{OI}_{++ep}/N(S)$ and let $f_i$, $i\in \mathbb{N}$, be a sequence of morphisms from $x$. These morphisms correspond to decompositions of elements $s_j$ into products of elements of $S$. Since the number of these decompositions, up to multiplication by $e$, is finite, there is a subsequence of $f_i$ such that for each $j$ the morphisms decompose the element $s_j$ in the same way up to multiplication by $e$. We can further choose a subsequence such that for each $j$ the number of elements $e$ between any two adjacent non-trivial elements in the decomposition of $s_j$, and also the number of elements $e$ before the first non-trivial element and the number of elements $e$ after the last non-trivial element, is non-decreasing. This gives a non-decreasing subsequence of morphisms.
\end{proof}

\begin{proposition}
Let $S$ be the group $\mathbb{Z}/2$. The category $\mathbf{OI}_{++ep}/N(S)$ is Gr\"obner.
\end{proposition}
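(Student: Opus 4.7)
My plan is to mirror the strategy of the previous proposition, but replace the step using finiteness of decompositions (which fails for $\mathbb{Z}/2$, since $e = t^{2k}$ for every $k \geq 0$) with an application of Higman's lemma on a suitably coloured alphabet. Property (G1) follows from Lemma~\ref{lem:faithful-g1} applied to the faithful projection $\mathbf{OI}_{++ep}/N(S) \to \mathbf{OI}_{++ep}$, exactly as in the previous two propositions. For (G2), a morphism out of $(s_1, \ldots, s_n)$ is an $n$-tuple of decompositions, one per coordinate, and refinement is componentwise; since finite products of well-quasi-orders are well-quasi-orders, it suffices to show that for each $s \in \mathbb{Z}/2$ the set $D(s)$ of non-empty words in $\{e,t\}$ with product $s$, ordered by refinement, is well-quasi-ordered.

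To analyse $D(s)$, I would write each $w \in D(s)$ uniquely as $w = e^{a_0} t e^{a_1} t \cdots t e^{a_N}$ with $a_k \geq 0$ and $N$ of the parity determined by $s$, and encode it as
$$\tilde\phi(w) = \bigl((a_0, 0), (a_1, 1), (a_2, 0), \ldots, (a_N, N \bmod 2)\bigr) \in (\mathbb{N} \times \{0,1\})^*,$$
where $\mathbb{N} \times \{0,1\}$ is equipped with the order $(n,c) \leq (n',c')$ iff $n \leq n'$ and $c = c'$. Since this order makes $\mathbb{N} \times \{0,1\}$ the disjoint union of two copies of $\mathbb{N}$, it is a well-quasi-order, and Higman's lemma then implies that $(\mathbb{N} \times \{0,1\})^*$ is well-quasi-ordered under subsequence embedding. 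A Higman embedding $\tilde\phi(w) \leq \tilde\phi(w')$ amounts to indices $0 \leq l_0 < l_1 < \cdots < l_N$ into the entries of $\tilde\phi(w')$ with $a_k \leq b_{l_k}$ and $l_k \equiv k \pmod 2$.

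The main step, which I expect to be the chief obstacle, is to convert such a parity-preserving embedding into an actual refinement $w \leq w'$. The plan is to partition $w'$ into consecutive pieces $C_0, T_1, C_1, T_2, \ldots, T_N, C_N$ refining the successive letters of $w$: when $a_k > 0$, place the $l_k$-th $e$-block of $w'$ inside $C_k$, and let each $T_k$ absorb everything in $w'$ strictly between consecutive selected blocks. The parity constraint $l_k \equiv k \pmod 2$ is exactly what guarantees that every $T_k$ has an odd number of $t$'s (product $t$) and every non-empty $C_k$ has an even number (product $e$), while $a_k \leq b_{l_k}$ provides enough $e$'s in $C_k$ to split it into $a_k$ non-empty pieces each of product $e$ by peeling off $a_k - 1$ single $e$'s from the block $e^{b_{l_k}}$. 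The subtle case is when some $a_k$ vanishes, forcing $C_k$ to be empty: the surrounding material then has to be folded into a neighbouring $T$-piece, and one must check that the merged $T$-piece retains odd $t$-count, which again follows from the parities of the chosen $l_k$'s.
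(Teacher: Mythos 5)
Your proposal is correct, but it takes a genuinely different route from the paper. The paper argues by bare-hands subsequence extraction: after passing to a subsequence where, in each coordinate, the number of $t$'s is either constant or strictly increasing, the constant case is handled by making the gap lengths of $e$'s non-decreasing, and the increasing case by the ad hoc trick of replacing every $e$ in the source decomposition by the pair $(t,t)$, so that the coordinate becomes a pure power of $t$, which is refined by any decomposition with enough $t$'s of the right parity. Your argument instead reduces, via closure of wqo's under finite products, to showing that the refinement order on decompositions of a fixed $s\in\mathbb{Z}/2$ is a wqo, and obtains this from Higman's lemma over the alphabet $\mathbb{N}\times\{0,1\}$ that records block lengths together with position parities. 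The decisive point — that a parity-preserving Higman embedding with $a_k\leq b_{l_k}$ can be converted into an actual refinement — does hold: the differences $l_k-l_{k-1}$ are odd, so each $T_k$ has product $t$; the boundary pieces work because the total numbers of $t$'s in $w$ and $w'$ are congruent mod $2$; and the $a_k=0$ cases merge into adjacent $T$-pieces without changing $t$-parity. You have only sketched this case analysis, so you should write it out, but there is no gap in the idea. The trade-off: the paper's proof is shorter and avoids the conversion bookkeeping, but its $e\mapsto(t,t)$ trick is special to $\mathbb{Z}/2$; your decorated-Higman encoding isolates the real combinatorial content (a wqo on parity-labelled block sequences) and would adapt more readily to other finite groups by decorating blocks with partial products instead of parities.
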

\begin{proof}
Again the projection $\mathbf{OI}_{++ep}/N(S)\to \mathbf{OI}_{++ep}$ is faithful, which implies property (G1).

Let $x=(s_1,\dots,s_n)$ be an object in $\mathbf{OI}_{++ep}/N(S)$ and let $f_i$, $i\in \mathbb{N}$, be a sequence of morphisms from $x$. We can choose a subsequence $f_i$ such that for all $j$ the number of symbols $1$ in the subsequence $(s'_{i_j},\dots,s'_{i_{j+1}-1})$ that replaces $s_j$ is either stable or increases, and if it is stable, we can further choose subsequence such that the numbers of symbols $0$ to the left of the first, in between of $l$-th and $(l+1)$-th for all $l$, and to the right of the last symbol $1$ are non-decreasing. Take any $f_i$ in the subsequence. Let $h$ be the morphism from the target of $f_i$ that, for all $j$ such that the number of symbols $1$ in $(s'_{i_j},\dots,s'_{i_{j+1}-1})$ increases with $i$, replaces the symbols $0$ in subsequence $(s'_{i_j},\dots,s'_{i_{j+1}-1})$ by subsequence $(1,1)$. There is some $N$ such that there is a morphism from $h\circ f_j$ to morphism $f_l$ in the subsequence for all $l>N$, i.e.\@ $f_j\leq f_l$.
\end{proof}

\begin{example}
Consider the opposite of the category $\mathcal{C}$ from \cite[Example~3]{garner2021operadic}. This category is obtained from the commutative monoid $\mathbb{R}_{\geq 0}$ as follows. Let $N(\mathbb{R}_{\geq 0})$ be the nerve of $\mathbb{R}_{\geq 0}$, seen as a presheaf over the Segal's category $\mathbf{FA}_*^{op}$, and consider the restriction of $N(\mathbb{R}_{\geq 0})$ to the subcategory $\mathbf{FA}^{op}$ of active morphisms. Then $\mathcal{C}^{op}$ is the full subcategory of $\mathbf{FA}^{op}/N(\mathbb{R}_{\geq 0})$ on objects $(s_1,\dots,s_n)$ such that $\sum_i s_i\leq 1$. 

Let $M$ be the module generated by $(1)$ and $N$ be the submodule of $M$ such that $N((s_1,\dots,s_n))=0$ if all $s_i$ except one are equal to $0$, and $N(x)=M(x)$ on the remaining objects $x$. There are morphisms $(1)\to (p, 1-p)$ for all $p\in(0,1)$, thus $N((p,1-p))\neq 0$. The module $N$ is not finitely generated, and the category $\mathcal{C}^{op}$ is not quasi-Gr\"obner.
\end{example}

\bibliographystyle{amsalpha}
\bibliography{main}

\end{document}